\theoremstyle{plain}
\theoremstyle{definition}
\newtheorem*{theorem*}{Theorem}
\newtheorem{thm}{Theorem}
\newtheorem{pr}[thm]{Proposition}
\newtheorem{cor}[thm]{Corollary}
\newtheorem{lem}[thm]{Lemma}
\theoremstyle{definition}
\newtheorem{rem}[thm]{\scshape{Remark}}
\newcommand\T{{\mathcal{T}}}
\def\cleardoublepage{\clearpage\if@twoside \ifodd\c@page\else
	\hbox{}
	\thispagestyle{empty}
	\newpage
	\if@twocolumn\hbox{}\newpage\fi\fi\fi}
\DeclareMathOperator{\Aut}{Aut}
\DeclareMathOperator{\End}{End}
\DeclareMathOperator{\st}{Stab}
\DeclareMathOperator{\St}{Stab}
\DeclareMathOperator{\Ker}{Ker}
\DeclareMathOperator{\Imm}{Im}
\DeclareMathOperator{\Sym}{Sym}
\DeclareMathOperator{\Alt}{Alt}
\keywords{Groups of automorphisms of rooted trees, verbal subgroups}
\subjclass[2020]{20E08}
\begin{document}
	
	\title[On verbal subgroups of $\Aut\T$]{On verbal subgroups of the group of automorphisms of regular rooted trees}
 \author[C. Delizia]{Costantino Delizia}
\address{Costantino Delizia: Dipartimento di Matematica, Università degli Studi di Salerno, Italy}
\email{cdelizia@unisa.it}
\author[M.\,E. Garciarena]{Mikel E. Garciarena}
 \address{Mikel E. Garciarena: Dipartimento di Matematica, Universit\`a di Salerno, 84084 Fisciano, Italy \--- Department of Mathematics, University of the Basque Country UPV/EHU, 48080 Bilbao, Spain}
 \email{mgarciarenaperez@unisa.it}
\author[M. Noce]{Marialaura Noce}
\address{Marialaura Noce: Dipartimento di Informatica, Università degli Studi di Salerno, Italy}
\email{mnoce@unisa.it}

\thanks{The second and the third authors are supported by the Spanish Government, grant PID2020-117281GB-I00, partly with FEDER funds.}	
	
\begin{abstract}
In this paper we revisit the description of all verbal subgroups of the group of automorphisms of a regular rooted tree $\T_d$, for $d>2$ and odd.
\end{abstract}

	\maketitle

\section{Introduction}


\noindent Groups of automorphisms of regular rooted trees are a rich source of examples with interesting properties, and they have been used to solve very important problems  in Group Theory.
The first Grigorchuk group, defined by Grigorchuk \cite{Grig80} in 1980,
is one of the first instances of an infinite finitely generated periodic group, thus providing a negative solution to the General Burnside Problem.
It is also the first example of a group with intermediate growth \cite{Grig1}, hence solving the Milnor Problem, and the first group being amenable but not elementary amenable. 
Many other groups of automorphisms of rooted trees have since been defined and studied.
Important examples are the Gupta-Sidki $p$-groups~\cite{Gupta1983}, for $p$ an odd prime, and the second Grigorchuk group~\cite{Grig80}.
These are again finitely generated infinite periodic groups and they belong to the large family of the so-called \emph{Grigorchuk-Gupta-Sidki} groups.
Let $\T_d$, or simply $\T$ when it is clear from the context, be the $d$-adic tree (that is, a regular rooted tree with $d$ descendants at every vertex), where $d\ge 2$. We denote with $\Aut\T$ the set of all automorphisms of $\T$ as a graph. An automorphism of $\T$ fixes the root and preserves incidence. So, roughly speaking, it induces a permutation on each level of the tree.
The set of all automorphisms of $\T$ form a group under composition; this is actually a profinite group.
Remarkable subgroups of $\Aut\T$ have been studied in many different algebraic contexts, and also in dynamics and cryptography. Among others, for some classes of subgroups of $\Aut\T$ it has been studied the existence of maximal subgroups of finite index, the decidability of some algorithmic problems, and the description of their Schreier graphs. Some research has been also done regarding verbal subgroups and the study of the lower central series of certain subgroups of $\Aut\T$, but in general very little is known. 

We recall that if $w$ is a group-word, and $G$ is a group, then the verbal subgroup $w(G)$ of $G$ determined by $w$ is the subgroup generated by the set of all values $w(g_1, \dots, g_n)$, where $g_1, \dots, g_n$ run among the elements of $G$. It is well known that every verbal subgroup is fully invariant. Conversely, fully invariant subgroups need not be verbal.
The terms $\gamma_i(G)$ of the lower central series are examples of verbal subgroups. More precisely, $\gamma_i(G)$ is generated by the values in $G$ of the commutator
$[x_1,\ldots,x_i]$ of length $i$.
Other important examples of words related to commutators are the Engel word
$e_i=[x,y,\overset{i}{\ldots},y]$, and the derived words $\delta_i$ that define the terms of the derived series.

An automorphism $f$ is said to be \textit{finitary} if there exists a positive integer $m$ such that the permutation induced by $f$ is the identity for all levels $k \geq m$. The subgroup of $\Aut\T_d$ of all finitary automorphisms of $\T_d$ is denoted by $\mathcal{F}_d$. 
By using some techniques regarding tabular representations of groups and polynomial expressions, Smetaniuk and Sushchansky proved in \cite{verbald2} that every verbal subgroup of $\mathcal{F}_2$ coincides with some terms of the lower central series.
Further research has been done about the lower central series
$\{\gamma_i(G)\}_{i\ge 1}$ of specific subgroups $G$ of $\Aut\T$, and more specifically of some particular Grigorchuk-Gupta-Sidki groups, but also in this case the knowledge is really scarce.
If $G$ is the first Grigorchuk group then $|G:G'|=8$, and Rozhkov \cite{Rozhkov1996}
showed that $|\gamma_i(G):\gamma_{i+1}(G)|=2$ or $4$ for all $i\ge 2$.
On the other hand, Bartholdi \cite{Bartholdi2005} proved that
$|\gamma_i(G):\gamma_{i+1}(G)|$ is not bounded as $n\to\infty$ when $G$ is the Gupta-Sidki $3$-group. Bartholdi, Eick, and Hartung conjectured in \cite{Bartholdi2008} that
$|\gamma_i(G):\gamma_{i+1}(G)|\le p^2$ for the generalized Fabrykowski-Gupta groups.
A group $G$ in which $|\gamma_i(G):\gamma_{i+1}(G)|$ is bounded for all $i\ge 1$ is said to be of \emph{finite width}.
Finally, Muntyan \cite{Muntyan} described the verbal subgroups of the automorphism group of a rooted tree of a regular rooted tree $\T_d$, where $d>2$ and odd. However this paper, which is available only in Russian, is difficult to comprehend since there are many results left unproven and some inaccuracies.

The goal of the present work is to fix this gap by providing an exhaustive and detailed version of the above results. 

The paper is organized as follows. In Section~2 we recall some general de\-fi\-nitions concerning the group of automorphisms acting on a rooted tree. In Section~3 we prove key preliminary results that we can be also used in a more general context, while in Section~4 we present the main  description of the verbal subgroups of the whole group of automorphisms of $\T_d$ for  $d>2$ and odd.

\section{Preliminaries}

\noindent A connected graph with no cycles is called a \textit{tree}. A tree is said to be \textit{rooted} it has a designated vertex called the \textit{root} and denoted by $\emptyset$. A rooted tree is \textit{regular} if every non-root vertex has the same \textit{valency}, defined as the number of edges passing through the vertex. If the valency of a regular rooted tree is $d+1$ then the \textit{degree} $d$ of the tree is the number of descendants of every non-root vertex. For every integer $d>1$ there is an unique regular rooted tree of degree $d$, that we will call simply the \textit{$d$-adic tree}, which looks like this:

\vspace{0.5cm}
\begin{center}
\includegraphics[scale = 0.46]{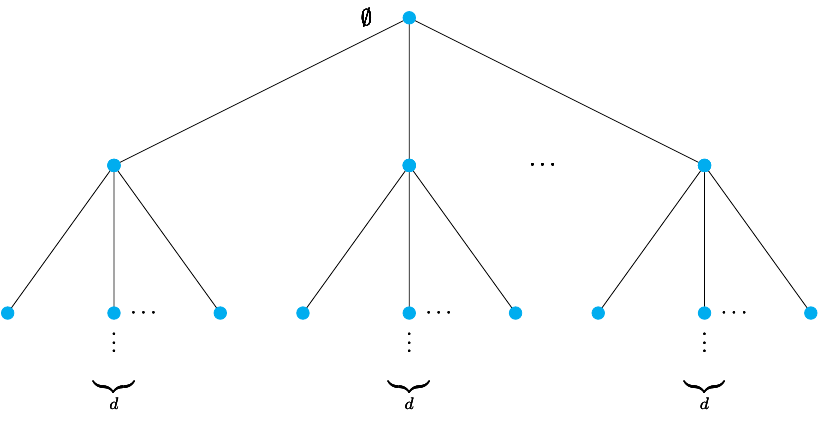}.
\end{center}

\noindent We denote this tree with $\T_d$. When the value of $d$ is clear from the context, we will simply write $\T$. Let $X = \{x_{1}, x_{2}, \dots, x_{d}\}$. A \textit{word} of positive length $n$ in $X$ is an expression of the form $x_{i_{1}} x_{i_{2}} \dots x_{i_{n}}$, where $i_{1}, i_{2}, \dots, i_{n} \in \{1, \dots, d\}$. Let $X^{n}$ be the set of all words of length $n$ and write $X^{*} = \bigcup_{n \ge0} X^{n}$. Then $X^{*}$ is the free monoid generated by $X$. Here the multiplication is defined by juxtaposition: if $u \in X^{n}$ and $v \in X^{m}$, then $uv \in X^{n+m}$. There is an obvious bijection between $X^{*}$ and $\T$: for all $n\ge0$, the elements in $X^n$ correspond to the $n$-th level of the tree.  Chosen a vertex $u$, the subtree of $\T$ hanging from $u$ constitutes the set $uX^{*}$, which is a copy of $\T$. It is customary to choose the set $X = \{1, \dots, d\}$.

The set $\Aut\T$ of all bijective maps of $X^{*}$ which preserve incidence, i.e.\ automorphisms of $\T$, is a group with respect to the ordinary composition between functions, defined by $(fg)(u) = g(f(u))$. Let $\alpha\in\Sym(d)$, the symmetric group on $d$ letters. Then the \textit{rooted automorphism} corresponding to $\alpha$ is the element of $\Aut\T$ obtained by rigidly permuting the subtrees hanging from the vertices $1, \dots, d$ using the permutation $\alpha$. On the other side, if $g\in\Aut\T$ is an automorphism sending $u$ to $v$ then
\begin{align*}
	u1 &\longmapsto v\alpha(1) \\
	u2 &\longmapsto v\alpha(2) \\
	&\,\,\,\vdots \\
	ud &\longmapsto v\alpha(d),
\end{align*}
where $\alpha$ is a suitable permutation. This element $\alpha \in \Sym(d)$ is called the \textit{label} of $g$ at the vertex $u$ and it is denoted with $g_{(u)}$. The \textit{portrait} of $g$ is the set of all labels of $g$.
Every $g\in\Aut\T$ is uniquely determined by its portrait. Indeed, if $u=x_1x_2\dots x_n\in X^n$, then we have
\begin{equation}\label{Eq:sequenceportrait}g(u)=g(x_1x_2\dots x_n)=(g_\emptyset(x_1)g_{(x_1)}(x_2)g_{(x_1x_2)}(x_3)\dots g_{(x_1x_2\dots x_{n-1})}(x_n)).
\end{equation}
The \textit{$n$-th level stabilizer} of $\Aut\T$ is 
$$
\St(n) = \{f \in \Aut\T \mid f(u) = u \ \forall u \in X^{n}\}.
$$
Clearly $\St(n)$ is a normal subgroup of $\Aut\T$ for all $n\in\mathbb{N}$. It immediately follows from (\ref{Eq:sequenceportrait}) that 
$$\St(1)\geq\St(2)\geq\dots\geq\St(n)\geq\dots.$$
We also have $$\bigcap_{n\in\mathbb{N}}\St(n)=\{1\}.$$ Let $f \in \Aut\T$ and $u \in X^{*}$, and suppose that $f(u) = u$. The \textit{section} of $f$ at $u$, denoted by $f_{u}$, is the restriction of $f$ to the subtree hanging from $u$, which can be identified with $\T$. Thus $f_{u}$ can be described by the rule $f(uv) = uf_{u}(v)$.  For all $n \in \mathbb{N}$ we can define an isomorphism
\begin{align*}
\psi_{n}: \St(n) & \longrightarrow \Aut\T \times \overset{d^n}{\dots} \times \Aut\T\\
g & \longmapsto (g_{u})_{u \in X^{n}}.
\end{align*}
Furthermore, straightforward computation shows that 
$$
\Aut\T=\Sym(d) \ltimes \St(1).
$$
This, together with the fact that $\St(1) \cong \Aut\T \times \overset{d}{\dots} \times \Aut\T$, implies 
that every element $g \in \Aut\T$ can be uniquely written in the form 
\begin{equation}\label{Equation:expressionelements}
g=v \sigma, \mbox{ where } v=\psi_1^{-1}(g_1,\dots,g_d) \in \St(1), \mbox{ and } \sigma \in \Sym(d).
\end{equation}
Throughout the paper we will always write elements of $\Aut\T$ in the form \eqref{Equation:expressionelements} making the abuse of notation of omitting $\psi_1$, that is $g=(g_1, \dots, g_d)\sigma$. 

Let $g=(g_1, \dots, g_d)\sigma$ and $h=(h_1, \dots, h_d)\tau$ be elements of $\Aut\T$, where $(g_1, \dots, g_d), (h_1, \dots, h_d)\in\St(1)$ and $\sigma,\tau\in\Sym(d)$. Then a straightforward calculation shows that the following formulas hold: \begin{align}
\label{f1} gh&=(g_1h_{\sigma^{-1}(1)}, \dots, g_dh_{\sigma^{-1}(d)})\sigma\tau,\\
\label{f2}g^{-1}&=(g_{\sigma(1)}^{-1}, \dots, g_{\sigma(d)}^{-1})\sigma^{-1},\\
\label{f}g^\tau&=(g_{\tau(1)}, \dots, g_{\tau(d)})\sigma^\tau,\\
\label{f3}g^h&=(h_{\tau(1)}^{-1}g_{\tau(1)}h_{(\sigma^{-1}\tau)(1))}, \dots, h_{\tau(d)}^{-1}g_{\tau(d)}h_{(\sigma^{-1}\tau)(d))})\sigma^\tau,\\
\label{f4}[g,h]&=(g_{\sigma(1)}^{-1}h_{(\tau\sigma)(1)}^{-1}g_{(\tau\sigma)(1)}h_{(\tau^\sigma)(1)}, \dots, g_{\sigma(d)}^{-1}h_{(\tau\sigma)(d)}^{-1}g_{(\tau\sigma)(d)}h_{(\tau^\sigma)(d)})[\sigma,\tau].\end{align}
Let now $g=v\sigma$, where $v\in \St(1)$ and $\sigma \in \Sym(d)$. Then 
\begin{align*}
g^2&=v\sigma v\sigma=vv^{\sigma^{-1}}\sigma^2\\
g^3&=vv^{\sigma^{-1}}\sigma^2v\sigma=vv^{\sigma^{-1}}v^{\sigma^{-2}}\sigma^3
\end{align*}
and hence, for all positive integers $n$, we get
\begin{equation}\label{power}
g^n=vv^{\sigma^{-1}}v^{\sigma^{-2}}\dots v^{\sigma^{-(n-1)}}\sigma^n.
\end{equation}
A subgroup $G$ of $\Aut\T$ is said to be \emph{spherically transitive} (or \emph{level transitive}) if it acts transitively on each level of the tree. That is, if for all $n\in\mathbb{N}$ and $u,v\in X^n$, there exists $g\in G$ such that $g(u)=v$. Of course, the
whole group $\Aut\T$ is spherically transitive.


 \section{First results}

\noindent In this section we prove some preliminary lemmas that are crucial for the study of verbal subgroups of $\Aut\T$. 
\begin{lem}[\cite{Conjugate_of_its_inverse}]\label{Lemma: conjucagyinverse}
Every element of $\Aut\T$ is conjugate to its inverse. 
\end{lem}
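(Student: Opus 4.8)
The plan is to exploit the self-similar structure of $\Aut\T$ together with the ambivalence of the symmetric group. Write an arbitrary $g\in\Aut\T$ in the form $g=(g_1,\dots,g_d)\sigma$ with $\sigma\in\Sym(d)$, and look for a conjugator $h=(h_1,\dots,h_d)\tau$ realizing $g^h=g^{-1}$. Comparing the expression for $g^h$ provided by \eqref{f3} with the expression for $g^{-1}$ provided by \eqref{f2}, the identity $g^h=g^{-1}$ decouples into two requirements: a \emph{root condition} $\sigma^\tau=\sigma^{-1}$ on the top permutations, and a \emph{system of section equations} in which the $h_i$ are coupled through the $g_i$.

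First I would dispose of the root condition. Conjugacy in $\Sym(d)$ is governed by cycle type, and $\sigma$ and $\sigma^{-1}$ have the same cycle type, so $\Sym(d)$ is ambivalent and a permutation $\tau$ with $\tau^{-1}\sigma\tau=\sigma^{-1}$ always exists; concretely one may take $\tau$ to reverse each cycle of $\sigma$. (Notice that this step, and hence the entire argument, makes no use of $d$ being odd.) I would then analyse the section equations one cycle of $\sigma$ at a time. Fixing a cycle $(a_0\,a_1\cdots a_{k-1})$ and writing $H_m=h_{a_m}$, $G_m=g_{a_m}$ with indices read modulo $k$, a direct substitution into the section equations yields the recurrence $H_m=G_mH_{m-1}G_{1-m}$. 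Propagating this relation once around the cycle leaves $H_0$ as a free parameter and produces a single \emph{return condition}, which simplifies to $H_0^{-1}PH_0=P^{-1}$, where $P=g_{a_0}g_{a_{k-1}}\cdots g_{a_1}$ is the \emph{cycle product}. Thus the sections along the cycle exist precisely when the cycle product $P$ is conjugate to its own inverse, and once $H_0$ is chosen as such a conjugator the remaining $H_m$ are forced by the recurrence.

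The main obstacle is that this reduction looks circular: the requirement $H_0^{-1}PH_0=P^{-1}$ is itself an instance of the statement we are proving, now for the section $P\in\Aut\T$. The way to break the circularity is to grade the problem by tree level. Working in the finite quotients $\Aut\T/\St(n)\cong(\Aut\T/\St(n-1))\wr\Sym(d)$, each section $g_{a_i}$, and hence the cycle product $P$, lives in $\Aut\T/\St(n-1)$, i.e. one level lower. I would therefore run the argument by induction on $n$, the base case being the ambivalence of $\Aut\T/\St(1)\cong\Sym(d)$, to conclude that every $\Aut\T/\St(n)$ is ambivalent.

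Finally, to lift ambivalence of all finite quotients to $\Aut\T$ itself, I would invoke compactness of the profinite group $\Aut\T$. The sets $C_n=\{h\in\Aut\T: g^h\equiv g^{-1}\pmod{\St(n)}\}$ are nonempty by the inductive conclusion, closed, and nested, so their intersection is nonempty and any element of it conjugates $g$ to $g^{-1}$. Equivalently, one may simply read the cycle-product reduction of the preceding paragraph as a recursive recipe that specifies the portrait of $h$ level by level; since every assignment of labels to vertices defines a genuine element of $\Aut\T$, this directly produces the desired conjugator.
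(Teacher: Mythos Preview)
The paper does not give its own proof of this lemma: it is simply quoted from the reference \cite{Conjugate_of_its_inverse} and used as a black box. There is therefore nothing in the paper to compare your argument against.

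That said, your proposal is correct and is essentially the standard argument from the cited source. The ambivalence of $\Sym(d)$ handles the root label; along each $\sigma$-cycle the section equations unwind to a single consistency constraint of the form $H_0^{-1}PH_0=P^{-1}$ for the cycle product $P$; and the resulting self-similar recursion is closed off either by compactness of the profinite group $\Aut\T$ (your sets $C_n$ have the finite intersection property) or, equivalently, by reading the recursion as a level-by-level specification of the portrait of $h$. One small remark: the precise form of the recurrence and of the cycle product depends on the choice of $\tau$ reversing the cycle and on the left/right conventions in \eqref{f1}--\eqref{f3}, so in a written-out version you would want to fix these choices explicitly before stating $H_m=G_mH_{m-1}G_{1-m}$; the structure of the argument is unaffected.
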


In other words,  for every $g\in \Aut\T$, there exists $x\in \Aut~\T$ such that $ g^{-1}=g^x$. Let $\T$ be the $d$-adic tree. The automorphism $$t=(1,\dots, 1, t)\sigma, \text { with } \sigma=(1 \  2 \dots \ d),$$ is called the \textit{adding machine} on $\T$.

\begin{lem}[\cite{Cyclic_renormalization}]\label{Lemma: Cyclic_renormalization}
An automorphism $a \in \Aut\T$ is spherically transitive if and only if it is conjugate to the adding machine.
\end{lem}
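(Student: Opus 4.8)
The plan is to prove the two implications separately, and to first record that the adding machine is itself spherically transitive, as this underlies both directions. Writing $t=(1,\dots,1,t)\sigma$ with $\sigma=(1\ 2\dots\ d)$ and applying the power formula \eqref{power} with $n=d$, one computes directly that $t^d=(t,\dots,t)$: since $\sigma^d=1$ the top permutation disappears, and the $d$ conjugates $v^{\sigma^{-k}}$ of $v=(1,\dots,1,t)$ place the single nontrivial section in each coordinate exactly once. Thus $t$ permutes the first-level subtrees as the $d$-cycle $\sigma$, and $t^d$ acts on each of them again as $t$. An induction on the level then shows that $t$ acts on the $n$-th level as a single $d^n$-cycle (a permutation cyclically permuting $d$ blocks whose $d$-th power is a $d^{n-1}$-cycle on each block is a $d^n$-cycle), so $t$ is spherically transitive. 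For the implication ``conjugate to $t\Rightarrow$ spherically transitive'', I note that restriction to the $n$-th level is a homomorphism $\Aut\T\to\Sym(X^n)$, so if $a=t^x$ then the permutation induced by $a$ on $X^n$ is conjugate in $\Sym(X^n)$ to the one induced by $t$; transitivity is preserved under conjugation, whence $a$ is spherically transitive.

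For the substantial direction, assume $a$ is spherically transitive and construct $x$ with $a^x=t$. Write $a=(a_1,\dots,a_d)\rho$; transitivity on the first level forces $\rho$ to be a $d$-cycle. By \eqref{power} the first-return map $a^d\in\St(1)$ has sections $b_i=a_i a_{\rho^{-1}(i)}\cdots a_{\rho^{-(d-1)}(i)}$, the cyclic product of all sections starting at $i$. A short orbit argument shows that each $b_i$ is spherically transitive on the $i$-th subtree: if $iu$ and $iv$ lie at the same level of that subtree, any power $a^m$ carrying $iu$ to $iv$ must fix the subtree, which (as $\rho$ is a $d$-cycle) forces $d\mid m$, so $b_i^{\,m/d}$ carries $u$ to $v$. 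This is the key structural fact that allows the argument to recurse.

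I then look for $x=(x_1,\dots,x_d)\pi$ with $\pi^{-1}\rho\pi=\sigma$ (possible, since both are $d$-cycles) and $a^x=(1,\dots,1,t)\sigma$. Comparing sections via the conjugation formula \eqref{f3} produces, after setting $i_0=\pi(d)$, the relations $x_i=a_i x_{\rho^{-1}(i)}$ for $i\neq i_0$ together with the single remaining relation $b_{i_0}^{\,x_{i_0}}=t$. The first family determines every $x_i$ as an explicit product of the sections $a_j$ times $x_{i_0}$, obtained by walking once around the cycle $\rho$; the remaining relation is precisely the \emph{same} problem one level down, namely conjugating the spherically transitive section $b_{i_0}$ to the adding machine on the corresponding subtree. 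Thus the top datum $(\pi,(x_i)_{i\neq i_0})$ is read off from $a$, while $x_{i_0}$ is supplied by repeating the construction for $b_{i_0}$, and since the relevant sections stay spherically transitive at every stage the recursion never stalls.

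The hard part is that this construction is genuinely infinite: $x$ is never produced at a finite stage, only its restrictions to deeper and deeper levels, so no single algebraic identity closes the argument. The crux is therefore to argue that the level-by-level data assemble into one well-defined automorphism. I would do this by defining $x$ through its portrait, checking that the recursion specifies a consistent label at every vertex, and then verifying $a^x=t$ by comparing sections at an arbitrary vertex with the relations above and invoking $\bigcap_{n}\St(n)=\{1\}$ to upgrade agreement on all levels to the equality $a^x=t$ in $\Aut\T$.
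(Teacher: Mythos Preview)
The paper does not supply its own proof of this lemma; it is quoted from \cite{Cyclic_renormalization} without argument. Your proof is correct and follows the standard route found in the literature: establish $t^d=(t,\dots,t)$ and deduce by induction that $t$ acts as a single $d^n$-cycle on level $n$; for the converse, recursively build the conjugator $x$ by reducing at each level to the same problem for the spherically transitive first-return section $b_{i_0}$, assemble $x$ through its portrait, and conclude $a^x=t$ via $\bigcap_n\St(n)=\{1\}$.
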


\begin{lem}\label{Lemma: Product_of_spherically_transitive}
Let $g \in \Aut\T$, and assume that $g=uu'$, where both $u$ and $u'$ are spherically transitive. Then $g$ is a commutator of the form $[t^a,b]$, for some $a,b\in\Aut\T$. In particular, $g \in (\Aut\T)'$.
\end{lem}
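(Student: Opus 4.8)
The plan is to exploit \cref{Lemma: Cyclic_renormalization} to replace the two spherically transitive factors of $g$ by conjugates of the adding machine $t$, and then to recognize the resulting product as a commutator of the prescribed shape. The algebraic backbone is the elementary identity
\begin{equation*}
[t^a,b]=(t^a)^{-1}(t^a)^b=(t^{-1})^a\,t^{ab},
\end{equation*}
valid for all $a,b\in\Aut\T$, which I would verify directly from $t^a=a^{-1}ta$ and $(t^a)^b=(ab)^{-1}t(ab)$. Read backwards, this identity says that as $a,b$ range over $\Aut\T$ the commutators $[t^a,b]$ realize exactly the products $(t^{-1})^x\,t^y$ with $x,y\in\Aut\T$ arbitrary: given such $x,y$, setting $a=x$ and $b=x^{-1}y$ gives $ab=y$ and hence $[t^a,b]=(t^{-1})^x t^y$. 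So it suffices to write $g$ as the product of a conjugate of $t^{-1}$ and a conjugate of $t$.

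The steps then run as follows. First, by \cref{Lemma: Cyclic_renormalization} both $u$ and $u'$ are conjugate to $t$; in particular I may fix $q\in\Aut\T$ with $u'=t^q$. Next, by \cref{Lemma: conjucagyinverse} the adding machine $t$ is conjugate to its inverse $t^{-1}$, so that $u$, being conjugate to $t$, is by transitivity of conjugacy also conjugate to $t^{-1}$; thus I obtain $p\in\Aut\T$ with $u=(t^{-1})^p$. (One notes in passing that $t^{-1}$ is itself spherically transitive, since spherical transitivity is conjugation-invariant.) Combining the two expressions, $g=uu'=(t^{-1})^p\,t^q$, and by the displayed identity this equals $[t^a,b]$ with $a=p$ and $b=p^{-1}q$. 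This exhibits $g=[t^a,b]$, and in particular $g\in(\Aut\T)'$.

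The only genuinely non-routine point, and the one I would single out as the crux, is the realization that the first factor must be presented as a conjugate of $t^{-1}$ rather than of $t$: the commutator $[t^a,b]$ unavoidably contributes the inverse $(t^a)^{-1}$, so to absorb a spherically transitive factor there one has to know that it is conjugate to $t^{-1}$, which is precisely where \cref{Lemma: conjucagyinverse} enters and bridges the two cited lemmas. Everything else is bookkeeping: the verification of the identity and the solving of $ab=q$ for $b$. I would also emphasize that no hypothesis relates $u$ and $u'$ to each other, so the independent freedom in choosing $a$ and $b$ is exactly what lets the two conjugating elements $p$ and $q$ be fitted together with no compatibility condition.
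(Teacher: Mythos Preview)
Your proof is correct and follows essentially the same approach as the paper: both arguments use \cref{Lemma: Cyclic_renormalization} to express the spherically transitive factors as conjugates of $t$, invoke \cref{Lemma: conjucagyinverse} to turn one of them into a conjugate of $t^{-1}$, and then identify the resulting product $(t^{-1})^p\,t^q$ as $[t^a,b]$ with $a=p$ and $b=p^{-1}q$. The paper carries out the same computation in a slightly different order (writing $u=t^x$, $u'=t^y$ first and then rewriting $t^x=(t^{-1})^{zx}$), but the content and the resulting values of $a,b$ coincide with yours.
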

\begin{proof}
By Lemma \ref{Lemma: Cyclic_renormalization} we can write $u=t^x$ and $u'=t^y$, where $t$ is the adding machine and $x,y\in \Aut\T$. By Lemma \ref{Lemma: conjucagyinverse}, there exists some $z\in \Aut\T$ such that $(t^{-1})^z=t$, and thus
\begin{align*}
    g&=uu'=t^xt^y=((t^{-1})^z)^xt^y=(t^{-1}(t^{y})^{(zx)^{-1}})^{zx}\\
    &=[t,y(zx)^{-1}]^{zx}=[t^{zx},(y(zx)^{-1})^{zx}]=[t^{zx},x^{-1}z^{-1}y],
\end{align*}
and the result follows.
\end{proof}

\begin{lem}\label{Lemma: Spherically transitive}
Let $a \in \Aut\T$ defined by:
$$
a = (1, \dots, 1, a_1)\sigma_1, \qquad a_k = (1, \dots, 1, a_{k+1})\sigma_{k+1}\quad \mbox{ for } k \geq 1,
$$
where each $\sigma_k$ is a cycle of length $d$. Then $a$ is spherically transitive.
\end{lem}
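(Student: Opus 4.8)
The plan is to prove the slightly stronger statement that, setting $a_0 := a$, \emph{every} automorphism $a_k$ in the sequence acts transitively on every level $X^n$ of the tree; spherical transitivity of $a = a_0$ is then the case $k=0$. Each $a_k$ has the same self-similar shape $a_k = (1,\dots,1,a_{k+1})\sigma_{k+1}$ with $\sigma_{k+1}$ a $d$-cycle, so it is natural to argue by induction on the level $n$ while keeping the statement uniform in $k$, since passing from level $n$ to level $n+1$ will replace $a_k$ by its shifted successor $a_{k+1}$.

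The key computation, which I would isolate first, is the identity
$$a_k^{\,d} = (a_{k+1}, a_{k+1}, \dots, a_{k+1}).$$
To obtain it, write $a_k = v\sigma$ with $v = (1,\dots,1,a_{k+1}) \in \St(1)$ and $\sigma = \sigma_{k+1}$. Since $\sigma$ is a $d$-cycle we have $\sigma^d = 1$, so \eqref{power} gives $a_k^d = v\,v^{\sigma^{-1}}\cdots v^{\sigma^{-(d-1)}} \in \St(1)$. By \eqref{f} the $i$-th section of $v^{\sigma^{-j}}$ is $v_{\sigma^{-j}(i)}$, and by \eqref{f1} the sections of a product of elements of $\St(1)$ multiply componentwise, so $(a_k^d)_i = \prod_{j=0}^{d-1} v_{\sigma^{-j}(i)}$. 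Now $v_\ell$ is nontrivial only for $\ell = d$, and as $j$ runs through $0,\dots,d-1$ the index $\sigma^{-j}(i)$ runs exactly once through every letter because $\sigma$ is a $d$-cycle; hence precisely one factor equals $a_{k+1}$ and all others are trivial, giving $(a_k^d)_i = a_{k+1}$ for every $i$.

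With this reduction available I would run the induction on $n$. For $n = 1$ each $a_k$ acts on $X^1$ as $\sigma_{k+1}$, a $d$-cycle, hence transitively. Assuming the claim on level $n$ for all indices, fix $k$ and consider level $n+1$, whose vertices are the $iu$ with $i \in \{1,\dots,d\}$ and $u \in X^n$. Decomposing an exponent as $m = qd + r$ with $0 \le r < d$ gives $a_k^m(1u_0) = a_k^r\bigl((a_k^d)^q(1u_0)\bigr)$. Since $a_k^d$ acts on the subtree below the first vertex exactly as $a_{k+1}$, and $a_{k+1}$ is transitive on $X^n$ by the inductive hypothesis, letting $q$ vary sweeps out all of $\{1u : u \in X^n\}$; applying $a_k^r$ then carries this whole set bijectively onto the level-$(n+1)$ part of the subtree below $a_k^r(1) = \sigma_{k+1}^{\,r}(1)$. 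As $r$ ranges over $0,\dots,d-1$ these base vertices exhaust all $d$ subtrees, so the orbit of $1u_0$ under $\langle a_k\rangle$ is all of $X^{n+1}$. This closes the induction, and $k=0$ yields the statement.

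The main obstacle is genuinely the identity $a_k^{\,d} = (a_{k+1},\dots,a_{k+1})$: one must track the conjugations in \eqref{power} with care and use that $\sigma_{k+1}$ being a \emph{full} $d$-cycle is exactly what collapses the product of sections to a single nontrivial factor. Once that reduction is in place, the level induction together with the $m = qd + r$ orbit argument is routine. (Alternatively one might try to exhibit an explicit conjugator taking $a$ to the adding machine and invoke \ref{Lemma: Cyclic_renormalization}, but constructing such a conjugator seems to require essentially the same computation, so the direct induction is preferable.)
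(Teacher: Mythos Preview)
Your proof is correct but takes a different route from the paper. The paper does exactly what you set aside in your final parenthetical: it constructs an explicit conjugator taking $a$ to the adding machine $t$. Since each $\sigma_k$ is a $d$-cycle, one picks $\alpha_k\in\Sym(d)$ with $\alpha_k(d)=d$ and $\sigma_k^{\alpha_k}=(1\ 2\ \cdots\ d)$, and defines recursively $x=(x_1,\dots,x_1)\alpha_1$, $x_k=(x_{k+1},\dots,x_{k+1})\alpha_{k+1}$; a single application of \eqref{f3} then gives $a^x=(1,\dots,1,a_1^{x_1})\sigma$ and likewise $a_k^{x_k}=(1,\dots,1,a_{k+1}^{x_{k+1}})\sigma$, so $a^x$ has the portrait of $t$ and one concludes. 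Contrary to your parenthetical remark, this construction does \emph{not} require the power identity $a_k^{\,d}=(a_{k+1},\dots,a_{k+1})$ and is noticeably shorter than the level induction. What your approach buys is self-containment---you never invoke Lemma~\ref{Lemma: Cyclic_renormalization} or any property of $t$---and the identity $a_k^{\,d}=(a_{k+1},\dots,a_{k+1})$ together with the $m=qd+r$ orbit decomposition gives a transparent direct picture of how transitivity propagates from one level to the next.
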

\begin{proof}
Since each $\sigma_k$ is a cycle of length $d$, it is conjugate in $\Sym(d)$ to the cycle $\sigma=(1 \ 2 \dots \ d)$. Thus for each
$k\geq 1$ there exists a permutation $\alpha_k$ such that $\alpha_k(d)=d$ and ${\sigma_k}^{\alpha_k}=\sigma$. Define $x\in \Aut \T$ as follows:
\begin{align*}
    x=(x_1,\dots,x_1)\alpha_1, \qquad x_k=(x_{k+1},\dots, x_{k+1})\alpha_{k+1},\quad k\geq 1.
\end{align*}
Then by (\ref{f3}) we have
\begin{align*}
    a^x=(1,\dots, 1,a_1^{x_1})\sigma,\qquad a_k^{x_k}=(1,\dots, 1,a_{k+1}^{x_{k+1}})\sigma.
\end{align*}
Since every element of $\Aut \T$ is uniquely determined by its portrait, it follows that $a^x=t$.
\end{proof}


Let $\T$ be the $d$-adic tree, and denote by $\Alt(d)$ the alternating group on $d$ letters. Consider the map $$P:g\in\Aut\T\mapsto(\varepsilon_0(g), \varepsilon_1(g), \varepsilon_2(g),\dots)\in\mathbb Z_2^{\mathbb N_0},$$
where
\begin{align*}
    \varepsilon_0(g)=\begin{cases}0 \quad&\text{if } g_\emptyset\in\Alt(d),\\
    1\quad&\text{if } g_\emptyset\not\in\Alt(d),\end{cases}
    \end{align*}
and, for all $n\in\mathbb N,$
$$
    \varepsilon_n(g)=\begin{cases}0 \quad&\text{if } \displaystyle\prod_{u\in X^n} g_{(u)}\in\Alt(d),\\ \\
    1\quad&\text{if } \displaystyle\prod_{u\in X^n}g_{(u)}\not\in\Alt(d).\end{cases}
$$
Then it is clear by (\ref{f1}) that $P$ is an epimorphism and so the image of $P$ is an abelian group. Therefore  $$(\Aut\T)'\leq\ker P=\{g\in\Aut\T\,|\,P(g)=(0,0,0,\dots)\}.$$ Notice that $\ker P$ is actually the set of those automorphisms of $\T$ for which the product of all elements in each level of the portrait is an even permutation. Hence recursively we have
$$
\ker P = \{(g_1, \dots, g_d)\sigma \mid g_1\dots g_d \in \ker P,\, \sigma \in \Alt(d)\}.
$$

\begin{thm}\label{Theorem: (AutT)'=ker(P)}
The derived subgroup $(\Aut\T)'$ is equal to $\ker P$. Moreover, every element of $(\Aut\T)'$ can be written as a product of two spherically transitive automorphisms. In particular, every element of $(\Aut\T)'$ is a commutator.
\end{thm}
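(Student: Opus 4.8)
The plan is to deduce all three assertions from a single factorization statement: \emph{every element of $\ker P$ is a product of two spherically transitive automorphisms.} The inclusion $(\Aut\T)'\le\ker P$ is already available, so once the factorization is established, Lemma~\ref{Lemma: Product_of_spherically_transitive} turns each $g\in\ker P$ into a commutator $[t^a,b]$, giving $g\in(\Aut\T)'$; this yields $\ker P\subseteq(\Aut\T)'$, hence equality, and at the same time records that every element of $(\Aut\T)'$ is a product of two spherically transitive automorphisms and, in particular, a commutator.

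To prove the factorization I would, given $g=(g_1,\dots,g_d)\sigma\in\ker P$, construct a single spherically transitive $u$ for which $u^{-1}g$ is again spherically transitive, and then write $g=u\,(u^{-1}g)$. I take $u$ of the recursive shape of Lemma~\ref{Lemma: Spherically transitive}, namely $u=(1,\dots,1,w_1)\rho_0$ with $w_n=(1,\dots,1,w_{n+1})\rho_n$ for $n\ge 1$, where every $\rho_n$ is a $d$-cycle; this portrait determines an automorphism, and Lemma~\ref{Lemma: Spherically transitive} guarantees that $u$ is spherically transitive whatever the $d$-cycles $\rho_n$ are. A direct computation with \eqref{f1}--\eqref{f2} then shows that $u^{-1}g$ has root permutation $\rho_0^{-1}\sigma$ and that the section of $(u^{-1}g)^{d}$ at a suitable first-level vertex is, up to conjugacy, $w_1^{-1}Q_1$, where $Q_1$ is the product of the sections $g_1,\dots,g_d$ taken in a fixed order. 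Since $P$ is a homomorphism into an abelian group it is insensitive to the order of the factors, and $g_1\cdots g_d\in\ker P$, so $Q_1\in\ker P$. Iterating, I set $Q_0=g$ and let $Q_{n+1}$ be the ordered product of the first-level sections of $Q_n$; each $Q_n\in\ker P$, so its root permutation $\tau_n$ lies in $\Alt(d)$, and the construction is self-similar: the $n$-th step presents $u^{-1}g$ with successive root permutations $\rho_n^{-1}\tau_n$ and successive first-return sections $w_{n+1}^{-1}Q_{n+1}$.

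It then remains to choose the $d$-cycles so that $u^{-1}g$ is spherically transitive. Using the standard recursive criterion, namely that an automorphism $(a_1,\dots,a_d)\rho$ is spherically transitive if and only if $\rho$ is a $d$-cycle and the first-return section $\pi_\rho(a)$ is spherically transitive, the element $u^{-1}g$ is spherically transitive exactly when each of the permutations $\rho_n^{-1}\tau_n$ is a $d$-cycle. Here the hypothesis that $d$ is odd enters decisively: every element of $\Alt(d)$ is a product of two $d$-cycles, so I may write $\tau_n=\rho_n\kappa_n$ with $\rho_n,\kappa_n$ both $d$-cycles, and then $\rho_n^{-1}\tau_n=\kappa_n$ is a $d$-cycle. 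With these choices both $u$ and $u^{-1}g$ are spherically transitive, completing the factorization.

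The main obstacle is precisely the spherical transitivity of $u^{-1}g$: in contrast to $u$, this element is not of the shape of Lemma~\ref{Lemma: Spherically transitive}, since in general all of its sections are nontrivial, so I cannot invoke that lemma directly. I would therefore first record the recursive criterion above (proved exactly as in Lemma~\ref{Lemma: Spherically transitive}, by tracking the action level by level), and then carry out the bookkeeping that identifies the iterated first-return sections of $u^{-1}g$ with the elements $w_{n+1}^{-1}Q_{n+1}$; this computation with \eqref{f1}--\eqref{f2}, together with the combinatorial fact that for odd $d$ every even permutation is a product of two $d$-cycles, is the delicate part of the argument.
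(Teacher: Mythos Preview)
Your approach is correct and shares the same skeleton as the paper's, but the two diverge at the key step. Both arguments write $g\in\ker P$ as $u\cdot v$ with $u$ of the recursive shape of Lemma~\ref{Lemma: Spherically transitive}, and both rely on the combinatorial fact that every element of $\Alt(d)$ is a product of two $d$-cycles. The difference lies in how the second factor is handled. The paper does not set $v=u^{-1}g$ and then argue separately that it is spherically transitive; instead it constructs, simultaneously with $u$, an auxiliary element $y$ so that $g=u\,u^{y}$. The second factor is then a \emph{conjugate} of $u$, hence spherically transitive for free by Lemma~\ref{Lemma: Cyclic_renormalization}, and no recursive transitivity criterion beyond Lemma~\ref{Lemma: Spherically transitive} is needed. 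Concretely, writing $\sigma=\alpha\alpha^{\beta}$ with $\alpha$ a $d$-cycle plays exactly the role of your choice $\rho_0^{-1}\sigma=\kappa_0$, and the system of equations the paper sets up for the sections of $y$ collapses to the single recursive condition $u_1u_1^{y_1}=g_dg_{i_1}\cdots g_{i_{d-1}}$, of the same shape as the original one. What the paper's route buys is that both factors are visibly conjugate to the adding machine via the lemmas already on the table; what your route buys is a cleaner bookkeeping (no auxiliary $y$, no system of equations), at the price of importing the ``first-return section'' criterion for spherical transitivity, which is standard but is not among the paper's stated lemmas.

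One correction: your line ``the hypothesis that $d$ is odd enters decisively'' is off. The fact that every element of $\Alt(d)$ is a product of two $d$-cycles holds for every $d\ge 2$, and indeed Theorem~\ref{Theorem: (AutT)'=ker(P)} carries no oddness assumption (the very next corollary applies it to the binary tree). The odd hypothesis in the paper only enters later, in Section~4.
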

\begin{proof}
We already noticed that $(\Aut\T)'\leq \ker P$. 
By Lemma \ref{Lemma: Product_of_spherically_transitive} we only need to show that every element in $\ker P$ is a product of two spherically transitive automorphisms. 

Let $g=(g_1,\dots, g_d)\sigma\in \ker P$, and so we have that $g_1\dots g_d\in \ker P$ and $\sigma \in \Alt(d)$. Our goal is to recursively construct $u$ and $y$ in $\Aut\T$ such that $g=uu^y$ and $u$ also satisfies \cref{Lemma: Spherically transitive}, which ensures that $u$ is spherically transitive. We start by setting
\begin{align*}
   u&=(1,\dots, 1,u_1)\alpha,\\
   y&=(x_1,\dots, x_d)\beta,
\end{align*}
where $\alpha,\beta\in \Sym(d)$ and $\alpha$ is a cycle of length $d$. Hence  $\alpha^{-1}=(i_1\ i_2\ \dots\ i_d)$ where $\{i_1,i_2,\dots,i_{d-1}\}=\{1,2,\dots,d-1\}$, $i_d=d$, and $\alpha\alpha^{\beta}=\sigma$. Set $$k_i=(\beta\alpha^{-1})(i)$$ for all $i=1,\dots,d$. So we have
$(\alpha^{-1}\beta\alpha^{-1})(i)=(\beta\alpha^{-1})(\alpha^{-1}(i))=k_{\alpha^{-1}(i)}$, and hence
\begin{align*}
    (\alpha^{-1}\beta\alpha^{-1})(i)=\begin{cases}k_{i_{j+1}} \quad&\text{if $i=i_j<d$,}\\
    k_{i_1}\quad&\text{if $i=d$.}\end{cases}
\end{align*}
By direct computation, using (\ref{f1}), (\ref{f2}) and (\ref{f3}) we obtain 
$$
    uu^y=\big(x^{-1}_{k_1}u_1^{\delta(k_1)}x_{k_{\alpha^{-1}(1)}}\,,\,\dots\,,\,x^{-1}_{k_{d-1}}u_1^{\delta(k_{d-1})}x_{k_{\alpha^{-1}(d-1)}}\,,\,u_1x^{-1}_{k_d}u_1^{\delta(k_d)}x_{k_{\alpha^{-1}(d)}}\big),
$$
where
\begin{align*}
    \delta(k)=\begin{cases}0 \quad&\text{if $k<d$,}\\
    1\quad&\text{if $k=d$.}\end{cases}
\end{align*}
The required equality $g=uu^y$ yields
\begin{align*}
    \begin{cases}
        x^{-1}_{k_1}u_1^{\delta(k_1)}x_{k_{\alpha^{-1}(1)}}&=g_1,\\
        &\,\,\vdots\\
        x^{-1}_{k_{d-1}}u_1^{\delta(k_{d-1})}x_{k_{\alpha^{-1}(d-1)}}&=g_{d-1},\\
        u_1x^{-1}_{k_d}u_1^{\delta(k_d)}x_{k_{\alpha^{-1}(d)}}&=g_d.
    \end{cases}
\end{align*}
By rearranging the terms, we get the following system of equations
\begin{align*}
    \begin{cases}
        u_1x^{-1}_{k_d}u_1^{\delta(k_d)}x_{k_{i_1}}&=g_d,\\
        x^{-1}_{k_{i_1}}u_1^{\delta(k_{i_1})}x_{k_{i_2}}&=g_{i_1},\\
        &\,\,\vdots\\
        x^{-1}_{k_{i_{d-1}}}u_1^{\delta(k_{i_{d-1}})}x_{k_{d}}&=g_{i_{d-1}}.
    \end{cases}
\end{align*}
From the equations above we easily obtain
\begin{align*}
    u_1u_1^{x_{k_d}}&=u_1x_{k_d}^{-1}u_1x_{k_d}\\
    &=g_dx^{-1}_{k_{i_1}}u_1^{-\delta(k_d)}u_1x_{k_d}\\
    &=g_dg_{i_1}x^{-1}_{k_{i_2}}u_1^{-\delta(k_{i_1})-\delta(k_d)}u_1x_{k_d}\\
    &=g_dg_{i_1}g_{i_2}x^{-1}_{k_{i_3}}u_1^{-\delta(k_{i_2})-\delta(k_{i_1})-\delta(k_d)}u_1x_{k_d}\\
    &\,\,\,\vdots\\
    &= g_dg_{i_1}\dots g_{i_{d-1}}.
\end{align*}
Note that finding $u$ and $y$ is now reduced to finding $u_1$ and $y_1=x_{k_d}$, which can be done in the same way as we did with $u$ and $y$ because $g_dg_{i_1}\dots g_{i_{d-1}}\in\ker P$. By repeating this process recursively we obtain 
\begin{align*}
    u&=(1,\dots, 1,u_1)\alpha,   &&u_j=(1,\dots, 1,u_{j+1})\alpha_j,\\
    y&=(x_1,\dots, x_d)\beta,  &&y_j=({x_{j}}_1,\dots, {x_{j}}_d)\beta_j,
\end{align*}
for $j\geq 1$. In particular,  $u$ is spherically transitive by \cref{Lemma: Spherically transitive}. This concludes the proof.
\end{proof}

From Lemma \ref{Lemma: Product_of_spherically_transitive} and Theorem \ref{Theorem: (AutT)'=ker(P)} it follows that
$$(\Aut\T)'=\{uv\,|\,u,v \text { sferically transitive}\}=\{[t^a,b]\,|\,a,b\in\Aut\T\}.$$
For the binary tree we have the following stronger result.

\begin{cor}
Let $\T$ be the binary tree. Then every element of $\Aut\T'$ is a commutator of the form $[t^{-1},h]$, where $t$ is the adding machine and $h \in \Aut\T$.
\end{cor}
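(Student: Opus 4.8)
The plan is to turn the statement into a single spherical-transitivity check and then settle it with the epimorphism $P$. With the commutator convention $[a,b]=a^{-1}a^{b}$ used throughout the paper, a direct computation gives $[t^{-1},h]=(t^{-1})^{-1}(t^{-1})^{h}=t\,(t^{h})^{-1}$. Hence writing a given $g\in(\Aut\T)'$ as $g=[t^{-1},h]$ amounts to solving $(t^{h})^{-1}=t^{-1}g$, i.e.\ $t^{h}=g^{-1}t$. By \cref{Lemma: Cyclic_renormalization} such an $h$ exists if and only if $g^{-1}t$ is conjugate to the adding machine, that is, if and only if $g^{-1}t$ is spherically transitive. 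So the corollary reduces to showing that $g^{-1}t$ is spherically transitive for every $g\in(\Aut\T)'$.

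The key tool I would establish is a parity criterion for the binary tree: an automorphism $a$ is spherically transitive if and only if $P(a)=(1,1,1,\dots)$. I would prove this by induction on the level, tracking the order $o_n$ of the permutation induced by $a$ on $X^{n}$. On $\T_2$ one has $o_{n+1}\in\{o_n,2o_n\}$, and $a$ is spherically transitive exactly when $o_n=2^{n}$ for every $n$, i.e.\ when the order doubles at each level. Assuming $a$ acts as a single $2^{n}$-cycle on $X^{n}$, the element $a^{2^{n}}$ fixes $X^{n}$ and its section at a level-$n$ vertex is the product of the sections of $a$ read once around that $2^{n}$-cycle; the sign of the root label of this product equals $\prod_{u\in X^{n}}a_{(u)}$ computed in $\mathbb{Z}_2$, which is precisely $\varepsilon_n(a)$. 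Therefore $a^{2^{n}}$ acts nontrivially on $X^{n+1}$, so that $a$ becomes a single $2^{n+1}$-cycle there, if and only if $\varepsilon_n(a)=1$. Starting from the trivial base level, the induction yields that $a$ is spherically transitive if and only if $\varepsilon_n(a)=1$ for all $n$, i.e.\ if and only if $P(a)=(1,1,1,\dots)$.

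With the criterion in hand the conclusion is immediate. Since $\T$ is the binary tree, \cref{Theorem: (AutT)'=ker(P)} gives $(\Aut\T)'=\ker P$, so $P(g)=(0,0,0,\dots)$. The adding machine is spherically transitive, hence $P(t)=(1,1,1,\dots)$; and as $P$ is a homomorphism whose image has exponent $2$, we obtain $P(g^{-1}t)=P(g^{-1})+P(t)=(1,1,1,\dots)$. By the criterion $g^{-1}t$ is spherically transitive, so the required $h$ with $t^{h}=g^{-1}t$ exists and $g=[t^{-1},h]$, as claimed.

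The main obstacle is the parity criterion, and within it the implication $P(a)=(1,1,1,\dots)\Rightarrow a$ spherically transitive: the bookkeeping showing that the section of $a^{2^{n}}$ at a fixed level-$n$ vertex has root-label sign equal to $\varepsilon_n(a)$, and that this forces the order to double, is the only genuinely computational point. Everything else is formal manipulation of $P$ together with \cref{Lemma: Cyclic_renormalization} and the identity $[t^{-1},h]=t\,(t^{h})^{-1}$.
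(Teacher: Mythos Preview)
Your proof is correct and takes a genuinely different route from the paper's. The paper observes that for $d=2$ the automorphism $u$ built inside the proof of \cref{Theorem: (AutT)'=ker(P)} is exactly the adding machine $t$, so every $g\in(\Aut\T)'$ already comes equipped with a witness $g=tt^{x}$; then \cref{Lemma: conjucagyinverse} supplies $y$ with $(t^{-1})^{y}=t$, and one reads off $g=t(t^{-1})^{yx}=[t^{-1},yx]$ in one line. This is very short, but it depends on revisiting the internal construction of that theorem rather than just its statement. Your approach instead reformulates the problem as showing $g^{-1}t$ is spherically transitive and settles this via the parity criterion ``$a$ spherically transitive $\Leftrightarrow P(a)=(1,1,1,\dots)$'', which you argue separately. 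The cost is the extra inductive lemma; the benefit is that you use only the \emph{statement} $(\Aut\T)'=\ker P$ together with \cref{Lemma: Cyclic_renormalization} and the homomorphism $P$, never reaching back into the proof of \cref{Theorem: (AutT)'=ker(P)}. The parity criterion is classical for the binary tree and your sketch of its proof (tracking the root label of the section of $a^{2^n}$ at a level-$n$ vertex) is accurate.
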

\begin{proof}
Since $d=2$, the automorphism $u$ constructed in the proof of Theorem \ref{Theorem: (AutT)'=ker(P)} is just the adding machine $t=(1,t)(12)$. It follows that each $g \in (\Aut\T)'$ can be written as $g=tt^x$ for some $x \in \Aut\T$. On the other hand, by Lemma \ref{Lemma: conjucagyinverse} there exists $y \in \Aut\T$ such that $(t^{-1})^y=t$. Thus 
$$
g=t(t^{-1})^{yx}=[t^{-1},yx],
$$
as required.
\end{proof}

\begin{rem}\label{Remark: Description of AutT'}
It follows from \cref{Theorem: (AutT)'=ker(P)} that
\begin{align*}
    (\Aut\T)'&=\{(g_1,\dots, g_d)\sigma \mid g_1\dots g_d\in (\Aut\T)',\,\sigma\in \Alt(d)\}\\
&=\big((\Aut\T)'\cap\st(1)\big)\cdot \Alt(d)
\end{align*}
\end{rem}

\begin{lem}\label{Lemma: coset contains elements of order two}
Let $\T$ be the $d$-adic tree with $d\geq 3$. Then every coset of  $(\Aut\T)'$ in $\Aut\T$ contains an element of order 2.
\end{lem}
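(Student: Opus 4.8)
The plan is to reduce the statement to a question about prescribed sign sequences and then to realize each such sequence by an explicit involution. Recall from \cref{Theorem: (AutT)'=ker(P)} that $(\Aut\T)'=\ker P$, where $P\colon\Aut\T\to\mathbb Z_2^{\mathbb N_0}$ is the epimorphism recording the parities $\varepsilon_n$. Since the image of $P$ is an elementary abelian $2$-group and $P$ is a homomorphism, two automorphisms lie in the same coset of $(\Aut\T)'=\ker P$ if and only if they have the same image under $P$. Hence it suffices to prove the following: for every sequence $\varepsilon=(\varepsilon_0,\varepsilon_1,\dots)\in\mathbb Z_2^{\mathbb N_0}$ there is an involution $h\in\Aut\T$ with $P(h)=\varepsilon$. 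Indeed, given an arbitrary coset $g(\Aut\T)'$, applying this to $\varepsilon=P(g)$ produces an involution $h$ with $P(h)=P(g)$, so that $h\in g(\Aut\T)'$.

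For the main construction I would introduce, for each $n\ge 0$, the automorphism $t_n$ whose portrait carries the transposition $(1\ 2)$ at the single vertex $3^n=3\cdots 3$ (of length $n$) and is trivial at every other vertex; this is exactly the rigid swap of the two subtrees hanging from $3^n1$ and $3^n2$, hence an involution. A direct reading of the definition of $P$ gives $P(t_n)=e_n$, the sequence with a $1$ in position $n$ and $0$ elsewhere, since the only nontrivial label at level $n$ is the odd permutation $(1\ 2)$ while all other levels are trivial. The key point is that for $n\neq m$ the supports of $t_n$ and $t_m$ are disjoint: if $n<m$ then $t_m$ only moves vertices below $3^n3$, whereas $t_n$ only moves vertices below $3^n1$ or $3^n2$. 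Consequently the $t_n$ commute pairwise and any product of distinct $t_n$'s is again an involution. Therefore, whenever $\varepsilon\neq 0$, the product $h=\prod_{n:\varepsilon_n=1}t_n$ (well defined, because of the disjoint supports) is a nontrivial involution with $P(h)=\sum_{n:\varepsilon_n=1}e_n=\varepsilon$, as wanted.

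It remains to treat the trivial coset, i.e.\ the case $\varepsilon=0$, where the product above degenerates to the identity and one must instead exhibit a \emph{nontrivial} involution lying in $(\Aut\T)'=\ker P$. Here I would take the automorphism $h_0$ whose portrait carries $(1\ 2)$ at the two level-one vertices $1$ and $2$ and is trivial elsewhere; it is the product of the two commuting rigid swaps of $\{11,12\}$ and of $\{21,22\}$, hence an involution, and its level-one parity is $\mathrm{sgn}\big((1\ 2)(1\ 2)\big)=0$ while all other parities vanish, so $P(h_0)=0$ and $h_0\in(\Aut\T)'$.

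The step I expect to be the main obstacle --- and the only place where the hypothesis $d\ge 3$ is genuinely used --- is guaranteeing that the prescribed parities can be realized by a single involution rather than merely by an element whose square lies in $(\Aut\T)'$. This is what forces the disjoint-support bookkeeping above: one needs a spare direction (child $3$) along which to propagate the sign sequence while keeping each sign-carrying transposition on a separate pair of subtrees $\{3^n1,3^n2\}$, so that all the blocks commute and square to the identity simultaneously. For $d=3$ this also explains why the trivial coset must be handled by two cancelling transpositions at level one, since $\Sym(3)$ contains no even involution to place at the root.
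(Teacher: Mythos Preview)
Your proof is correct and follows essentially the same approach as the paper: both realize a prescribed parity sequence by an involution whose portrait carries the transposition $(1\ 2)$ at the vertices of a single spine ($3^n$ for you, $d^n$ in the paper), and the resulting automorphism is the same whether described recursively as $a=(1,\dots,1,a_1)(12)^{\varepsilon_0}$ or as your infinite product $\prod t_n$ of commuting disjoint-support involutions. Your treatment is in fact slightly more careful than the paper's, since you handle the trivial coset explicitly and make transparent why the hypothesis $d\ge 3$ is needed.
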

\begin{proof}
Consider a sequence $(\varepsilon_0,\varepsilon_1,\varepsilon_2,\dots)\in \prod_{i=0}^\infty \mathbb{Z}_2$ where at least one component is different from 0, and define recursively an element $a \in \Aut\T$ by setting
\begin{align*}
    a=(1,\dots,1,a_1)(12)^{\varepsilon_0}, \quad a_i=(1,\dots,1,a_{i+1})(12)^{\varepsilon_i}.
\end{align*}
Note that  $P(a)=(\varepsilon_0,\varepsilon_1,\varepsilon_2,\dots)$. On the other hand,  using  \cref{Theorem: (AutT)'=ker(P)} and the fact that $P$ is an epimorphism  we have 
\begin{align*}
    \frac{\Aut \T}{\ker P}=\frac{\Aut \T}{(\Aut\T)'}\cong \Imm P = \prod_{i=0}^\infty \mathbb{Z}_2.
\end{align*}
Thus every coset of  $(\Aut\T)'$ in $\Aut\T$ contains an element of the form of $a$. Furthermore, it is easy to see that if $a$ acts on a $d$-adic tree with $d\geq 3$, then $a$ has order 2. Hence every coset of  $(\Aut\T)'$ in $\Aut\T$ contains an element of order 2.
\end{proof}

Let $G$ be any group. Given an element $g \in G$, we will denote by $\langle g\rangle^{G}$  the \textit{normal closure} of $g$ in $G$, and by $\langle \langle g \rangle \rangle$ the \textit{endomorphic closure} of $g$ in $G$, that is
$$
    \langle \langle g \rangle \rangle=\langle \varphi(g):\varphi \in \End(G)\rangle,
$$
 where $\End(G)$ is the monoid of all endomorphisms of  $G$. Clearly $\langle g\rangle^{G}\leq \langle\langle g\rangle\rangle$. It is also obvious that a subgroup $H\leq G$ is fully invariant if and only if $\langle \langle h \rangle \rangle\leq H$, for all $h\in H$.

\begin{lem}\label{Lemma: Circular} Let $\T$ be the $d$-adic tree, and $g,x\in \Aut\T$. If $(x,1,\dots,1)\in\langle\langle g\rangle\rangle$ then $(x,x,\dots,x)\in\langle\langle g\rangle\rangle$.
\end{lem}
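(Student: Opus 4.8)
The plan is to exploit the fact that the endomorphic closure $\langle\langle g\rangle\rangle$ is fully invariant, and in particular \emph{normal} in $\Aut\T$: every inner automorphism of $\Aut\T$ is an endomorphism, so closure under endomorphisms already forces closure under conjugation by arbitrary elements. The strategy is then to conjugate the given element $(x,1,\dots,1)$ by suitable rooted automorphisms in order to slide the single nontrivial entry $x$ into each of the $d$ coordinates, and finally to multiply the resulting elements together inside $\St(1)$.

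Concretely, for each $i\in\{1,\dots,d\}$ I would consider the transposition $\tau_i=(1\ i)\in\Sym(d)$ viewed as a rooted automorphism of $\T$ (so that $\tau_1$ is the identity). Writing $(x,1,\dots,1)=(x,1,\dots,1)\,\mathrm{id}$ and applying the conjugation formula \eqref{f}, the $j$-th coordinate of the conjugate is the $\tau_i(j)$-th coordinate of the original; since $\tau_i$ fixes every index other than $1$ and $i$ and sends $i$ to $1$, the entry $x$ ends up precisely in coordinate $i$ while all other coordinates stay trivial. Thus $(x,1,\dots,1)^{\tau_i}$ is the element of $\St(1)$ having $x$ in the $i$-th coordinate and $1$ elsewhere. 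As $(x,1,\dots,1)\in\langle\langle g\rangle\rangle$ and $\langle\langle g\rangle\rangle$ is normal, each of these $d$ conjugates again lies in $\langle\langle g\rangle\rangle$.

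It then remains to multiply these elements together. Since they all belong to $\St(1)$, the multiplication formula \eqref{f1} reduces their product to the coordinatewise product, so that
\[
\prod_{i=1}^{d}(x,1,\dots,1)^{\tau_i}=(x,x,\dots,x).
\]
Because $\langle\langle g\rangle\rangle$ is a subgroup, this product belongs to it, which is exactly the assertion.

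I do not expect a genuine obstacle in this argument; the essential point is simply the observation that full invariance entails normality, which is what allows conjugation by rooted permutations to redistribute the entry $x$ across the coordinates. The only step demanding a little care is the index bookkeeping in \eqref{f}, to confirm that $\tau_i$ places $x$ in coordinate $i$ and nowhere else; once that is checked, the coordinatewise multiplication step is immediate.
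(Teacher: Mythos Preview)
Your argument is correct and follows essentially the same idea as the paper: shift the single nontrivial coordinate $x$ into each of the $d$ positions via an endomorphism, then multiply inside $\St(1)$. The only cosmetic difference is that the paper uses a single cyclic-shift map $\theta$ (applied repeatedly) rather than your conjugations by the rooted transpositions $\tau_i$; since inner automorphisms are endomorphisms, your version is equally valid and arguably more transparent.
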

\begin{proof} Let consider the map
$$\theta:(x_1,x_2,\dots,x_d)\sigma\in\Aut\T\mapsto(x_2,x_3,\dots,x_d,x_1)\sigma\in\Aut\T.
$$
Then $\theta\in\End(\Aut\T)$ by (\ref{f1}), and the result follows.
\end{proof}

\begin{lem}\label{Lemma: Playing with endomorphic and normal closure of g} Let $\T$ be the $d$-adic tree, and let $g$ be any element of $\in \Aut\T$.
\begin{enumerate}[label=(\roman*)]
\item If $h\in\Aut\T$ and $h\in\langle\langle g\rangle\rangle$, then $\langle\langle h\rangle\rangle\leq\langle\langle g\rangle\rangle$.
\item If $g=(g_1, \dots, g_d) \in \st(1)$, then $\big(g_{\sigma(1)}^{x_{\sigma(1)}}, \dots, g_{\sigma(d)}^{x_{\sigma(d)}}\big)\in \langle g \rangle^{\Aut\T}$ for all $x_i\in \Aut\T$ and $\sigma\in \Sym(d)$. \item If $g=(g_1, \dots, g_d) \in \st(1)$, then $(g_1^{\varepsilon_1}, \dots, g_d^{\varepsilon_d})\in \langle g\rangle^{\Aut\T}$ for all $\varepsilon_i=\pm1$.
\item If $g=(1, \dots, 1,g_i,1,\dots, 1) \in \{1\} \times \dots \times \{1\}\times \Aut\T\times \{1\}\times \dots\times\{1\}$, then 
$$
\langle \langle g \rangle \rangle \geq \langle \langle g_i \rangle \rangle \times \dots \times \langle \langle g_i \rangle \rangle.
$$
\end{enumerate}
\end{lem}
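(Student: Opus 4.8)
The plan is to handle the four parts almost independently, letting (ii) feed (iii) and reserving one genuinely new endomorphism for (iv). For (i) the key observation is that $\langle\langle g\rangle\rangle$ is itself fully invariant: for any $\psi\in\End(\Aut\T)$ and any generator $\varphi(g)$ of $\langle\langle g\rangle\rangle$ we have $\psi(\varphi(g))=(\psi\circ\varphi)(g)\in\langle\langle g\rangle\rangle$, so $\psi(\langle\langle g\rangle\rangle)\leq\langle\langle g\rangle\rangle$. Then I would invoke the characterization recorded just before the statement, namely that a subgroup is fully invariant if and only if it contains $\langle\langle h\rangle\rangle$ for each of its elements $h$; applying this to $h\in\langle\langle g\rangle\rangle$ gives $\langle\langle h\rangle\rangle\leq\langle\langle g\rangle\rangle$ at once. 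There is no real obstacle here.

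For (ii) I would conjugate in two stages. Writing $g=(g_1,\dots,g_d)$ with trivial root label, first conjugate by the rooted automorphism $\sigma$: by \eqref{f} this yields $g^\sigma=(g_{\sigma(1)},\dots,g_{\sigma(d)})\in\St(1)$. Then conjugate $g^\sigma$ by $(x_{\sigma(1)},\dots,x_{\sigma(d)})\in\St(1)$; by \eqref{f3} this conjugation acts coordinatewise and produces exactly $(g_{\sigma(1)}^{x_{\sigma(1)}},\dots,g_{\sigma(d)}^{x_{\sigma(d)}})$. Since both stages are conjugations by elements of $\Aut\T$, the result lies in $\langle g\rangle^{\Aut\T}$. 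Part (iii) is then a specialization: $g_i^{+1}=g_i^{\,1}$, while by \cref{Lemma: conjucagyinverse} each $g_i$ admits some $z_i$ with $g_i^{-1}=g_i^{z_i}$; choosing $x_i\in\{1,z_i\}$ according to the sign $\varepsilon_i$ and applying (ii) with $\sigma=\mathrm{id}$ gives $(g_1^{\varepsilon_1},\dots,g_d^{\varepsilon_d})\in\langle g\rangle^{\Aut\T}$.

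The heart of the lemma is (iv), and the hard part is precisely that the naive idea fails: one cannot "project $\langle\langle g\rangle\rangle$ onto the $i$-th coordinate and re-embed", because the coordinate projection $\St(1)\to\Aut\T$ does not extend to an endomorphism of $\Aut\T=\Sym(d)\ltimes\St(1)$ (it is incompatible with the $\Sym(d)$-action on the coordinates). The fix, and the one new ingredient, is to apply a fixed endomorphism in every coordinate simultaneously. For $\nu\in\End(\Aut\T)$ I would introduce $\Phi_\nu$ defined by $\Phi_\nu\big((h_1,\dots,h_d)\tau\big)=(\nu(h_1),\dots,\nu(h_d))\tau$, and check, exactly as for the shift map in \cref{Lemma: Circular}, that $\Phi_\nu\in\End(\Aut\T)$: by \eqref{f1}, applying $\nu$ in each coordinate commutes with the coordinate permutation induced by $\tau$, so $\Phi_\nu$ respects products.

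To finish (iv) I would apply $\Phi_\nu$ to $g=(1,\dots,1,g_i,1,\dots,1)$; since $\nu(1)=1$ this gives $\Phi_\nu(g)=(1,\dots,1,\nu(g_i),1,\dots,1)\in\langle\langle g\rangle\rangle$ by full invariance (part (i)). Because $\langle\langle g\rangle\rangle$ is fully invariant, hence normal and containing $\langle\Phi_\nu(g)\rangle^{\Aut\T}$, conjugating by rooted permutations moves $\nu(g_i)$ into any prescribed coordinate $j$ (by \eqref{f}); thus, for every $j$ and every $\nu$, the element with $\nu(g_i)$ in coordinate $j$ and $1$ elsewhere lies in $\langle\langle g\rangle\rangle$. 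As $\nu$ ranges over $\End(\Aut\T)$ these elements generate the copy of $\langle\langle g_i\rangle\rangle$ embedded in the $j$-th coordinate, and since $\langle\langle g\rangle\rangle$ is a subgroup in which the $d$ coordinate copies commute inside $\St(1)$, their internal product $\langle\langle g_i\rangle\rangle\times\dots\times\langle\langle g_i\rangle\rangle$ is contained in $\langle\langle g\rangle\rangle$, as required. (One could equally first move $\nu(g_i)$ to the first coordinate and then quote \cref{Lemma: Circular}.) I expect the verification that $\Phi_\nu$ is an endomorphism to be the only delicate point; everything else is bookkeeping with \eqref{f1}, \eqref{f}, and \eqref{f3}.
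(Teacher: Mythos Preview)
Your proposal is correct and follows essentially the same route as the paper: for (ii) and (iii) you use exactly the paper's single conjugation via \eqref{f3} (you split it into two conjugations, which is equivalent) together with \cref{Lemma: conjucagyinverse}, and for (iv) your endomorphism $\Phi_\nu$ is precisely the paper's $\overline{\varphi}$ in~\eqref{Equation: How endomorphism work}, after which the paper invokes \cref{Lemma: Circular} just as you suggest at the end. The only cosmetic difference is in (i): the paper expands $h$ and then $x\in\langle\langle h\rangle\rangle$ as explicit products of images $\varphi_j(g)$ and composes, whereas you argue more cleanly that $\langle\langle g\rangle\rangle$ is fully invariant and then apply the characterization stated just before the lemma---both arguments are immediate.
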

\begin{proof} Since $h\in \langle \langle g \rangle\rangle$, there exist $\varphi_1, \dots, \varphi_s \in \End(\Aut\T)$ such that 
$$h=\varphi_1(g)\cdots\varphi_s(g).$$
Let $x\in\langle \langle h \rangle\rangle$. Then there exist $\psi_1, \dots, \psi_t \in \End(\Aut\T)$ such that 
\begin{align*}
x&=\psi_1(h)\cdots\psi_t(h)\\
&=\psi_1(\varphi_1(g)\cdots\varphi_s(g))\cdots\psi_t(\varphi_1(g)\cdots\varphi_s(g))\\
&=(\varphi_1\psi_1)(g)\cdots(\varphi_s\psi_1)(g)\cdots(\varphi_1\psi_t)(g)\cdots(\varphi_s\psi_t)(g)\in\langle \langle g \rangle\rangle
\end{align*}
since $\varphi_i\psi_j\in\End(\Aut\T)$ for all $i=1,\dots,s$ and $j=1,\dots,t$. Hence (i) holds.
Statement (ii) follows from the fact that if  $x=(x_1, \dots, x_d)\sigma\in \Aut\T$, then by (\ref{f3}) we get
\begin{align*}
g^x=\Big(g_{\sigma(1)}^{x_{\sigma(1)}}, \dots, g_{\sigma(d)}^{x_{\sigma(1)}}\Big).
\end{align*}
Statement (iii) is a direct consequence of statement (ii) and Lemma \ref{Lemma: conjucagyinverse}.
In order to prove statement (iv), it obviously suffices to consider the case when $i=1$. For every endomorphism $\varphi$  of $\Aut\T$, define a map $\overline{\varphi}$ by setting
\begin{equation}\label{Equation: How endomorphism work}
\overline{\varphi}((x_1,\dots,x_d)\sigma)=(\varphi(x_1),\dots,\varphi(x_d))\sigma
\end{equation}
for all $(x_1,\dots,x_d)\sigma\in\Aut\T$. It is easy to check that $\overline\varphi$ is an endomorphism of $\Aut\T$. Now if $\varphi \in \End(\Aut\T)$ then $\overline{\varphi}(g)=(\varphi(g_1), 1, \dots, 1)$, hence 
$$
\langle \langle g \rangle \rangle \geq \langle \langle g_1 \rangle \rangle \times \{1\} \times \dots \times \{1\},
$$
and statement (iv) follows by \cref{Lemma: Circular}.
\end{proof}

\begin{lem}\label{Lemma: Split}
Let $g\in\Aut\T$, and let $\langle \langle g \rangle \rangle \geq\{(x,x,1,\dots,1)\ |\ x\in\Aut\T\}$. Then $\langle \langle g \rangle \rangle \geq(\Aut\T)'\cap\St(1)$.
\end{lem}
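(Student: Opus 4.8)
The plan is to use the hypothesis to build, inside $\langle\langle g\rangle\rangle$, enough elements of $\St(1)\cong\Aut\T\times\overset{d}{\dots}\times\Aut\T$ to exhaust the set $(\Aut\T)'\cap\St(1)=\{(g_1,\dots,g_d)\mid g_1\cdots g_d\in(\Aut\T)'\}$ furnished by \cref{Remark: Description of AutT'}. I work throughout inside $\St(1)$, where multiplication is componentwise, and I use that $\langle\langle g\rangle\rangle$ is fully invariant, hence stable under the shift endomorphism $\theta$ of \cref{Lemma: Circular}. Applying the powers of $\theta$ to the given generators $(x,x,1,\dots,1)$ shows that $\langle\langle g\rangle\rangle$ contains every element $p_i(x)$ carrying $x$ in two cyclically consecutive coordinates $i,i+1$ and $1$ elsewhere, for all $i$ and all $x\in\Aut\T$.

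Next I would produce the elements $\tau_{ij}(a)$ having $a$ in coordinate $i$, $a^{-1}$ in coordinate $j$, and $1$ elsewhere. A componentwise computation gives $p_i(a)\,p_{i+1}(a^{-1})=\tau_{i,i+2}(a)$, so all distance-two elements lie in $\langle\langle g\rangle\rangle$, and telescoping, $\tau_{i,i+2}(a)\,\tau_{i+2,i+4}(a)=\tau_{i,i+4}(a)$ and so on, shifts the $a^{-1}$ two coordinates at a time. Because $d$ is odd, $\gcd(2,d)=1$, so stepping by two cyclically reaches every coordinate; hence $\tau_{ij}(a)\in\langle\langle g\rangle\rangle$ for all $i\neq j$ and all $a$. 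This telescoping --- and in particular recovering the adjacent $\tau_{i,i+1}(a)$ via the parity of $d$ --- is the step I expect to be the main obstacle, since for even $d$ one reaches only a parity-restricted subgroup.

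With all $\tau_{ij}(a)$ available, two facts follow. First, the subgroup $D=\{(g_1,\dots,g_d)\mid g_1\cdots g_d=1\}$ is contained in $\langle\langle g\rangle\rangle$: given $(g_1,\dots,g_d)\in D$, left multiplication by $\tau_{1,2}(g_1^{-1})$, then $\tau_{2,3}((g_1g_2)^{-1})$, and so on telescopes the tuple to the identity, exhibiting it as a product of the $\tau_{i,i+1}$. Second, for any $c\in(\Aut\T)'$ I invoke \cref{Theorem: (AutT)'=ker(P)}: $c$ is a commutator, say $c=[p,q]$, and the componentwise identity $[\tau_{1,2}(p),\tau_{1,3}(q)]=([p,q],1,\dots,1)$ puts $(c,1,\dots,1)$ into $\langle\langle g\rangle\rangle$ for every $c\in(\Aut\T)'$.

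Finally I would combine the two. Given any $(g_1,\dots,g_d)\in(\Aut\T)'\cap\St(1)$, set $P:=g_1\cdots g_d\in(\Aut\T)'$ and write
\[
(g_1,\dots,g_d)=(P,1,\dots,1)\cdot(P^{-1}g_1,g_2,\dots,g_d).
\]
The first factor lies in $\langle\langle g\rangle\rangle$ by the commutator construction, while the second lies in $D$ --- its coordinate product is $P^{-1}g_1g_2\cdots g_d=1$ --- hence also in $\langle\langle g\rangle\rangle$. Therefore $(g_1,\dots,g_d)\in\langle\langle g\rangle\rangle$, giving the inclusion $(\Aut\T)'\cap\St(1)\subseteq\langle\langle g\rangle\rangle$ as claimed.
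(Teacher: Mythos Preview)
Your argument is correct (for odd $d\ge 3$, which is the paper's standing hypothesis), and its overall architecture matches the paper's: first manufacture the ``transvections'' $\tau_{i,j}(a)=(\dots,a,\dots,a^{-1},\dots)$, then place a single commutator in one slot to capture $(\Aut\T)'\times\{1\}\times\dots\times\{1\}$, and finally telescope to reach all of $(\Aut\T)'\cap\St(1)$. The genuine difference is in how you obtain the adjacent transvections $(a,a^{-1},1,\dots,1)$. The paper invokes \cref{Lemma: conjucagyinverse} via \cref{Lemma: Playing with endomorphic and normal closure of g}\,(iii): since $a^{-1}=a^{y}$ for some $y$, conjugating $(a,a,1,\dots,1)$ by $(1,y,1,\dots,1)$ yields $(a,a^{-1},1,\dots,1)$ in one stroke, with no parity restriction on $d$. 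You instead form $p_i(a)p_{i+1}(a^{-1})=\tau_{i,i+2}(a)$ and step by $2$ around the cycle, which needs $\gcd(2,d)=1$; this is why your proof is confined to odd $d$, whereas the paper's version of the lemma carries no such hypothesis. In exchange, your route is more elementary in that it avoids the conjugacy-to-inverse fact entirely. The commutator step is also arranged slightly differently: the paper computes $(x^{-1},x^{-1},1,\dots,1)(x^{y},x,1,\dots,1)=([x,y],1,\dots,1)$, while you take $[\tau_{1,2}(p),\tau_{1,3}(q)]=([p,q],1,\dots,1)$; both rely on \cref{Theorem: (AutT)'=ker(P)} to conclude that a single commutator suffices. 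The final telescoping is the same idea in both proofs.
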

\begin{proof}
Our hypothesis, together with \cref{Lemma: Circular} and \cref{Lemma: Playing with endomorphic and normal closure of g} ensures that 
\begin{equation}\label{eq:x,inverse}(x,x^{-1},1,\dots,1)\in\langle\langle g\rangle\rangle
\end{equation}
for all $x\in\Aut\T$.
By \cref{Lemma: Playing with endomorphic and normal closure of g} we also get $(x^y,x,1,\dots,1)\in\langle\langle g\rangle\rangle$ for all $x,y\in \Aut\T$. Hence
\begin{equation}\label{eq:tauconjugate2}
(x^{-1},x^{-1},1,\dots,1)(x^y,x,1,\dots,1)=([x,y],1,\dots,1)\in\langle\langle g\rangle\rangle
\end{equation}
for all $x,y\in \Aut\T$. So by Theorem \ref{Theorem: (AutT)'=ker(P)} we get $(\Aut\T)'\times \{1\} \times \dots \times \{1\} \leq\langle\langle g\rangle\rangle$, and therefore $(\Aut\T)' \times \dots \times (\Aut\T)' \leq \langle\langle g\rangle\rangle$. Let $h=(h_1,\dots,h_d)$ be an element of $(\Aut\T)'\cap\st(1)$. Then we can be write
\begin{align*}
    h&=(h_1,h_1^{-1},1,\dots, 1)(1,h_1h_2,(h_1h_2)^{-1},1,\dots, 1)\cdots\\
    &\qquad\cdots(1,\dots,1,h_1h_2\cdots h_{d-1},(h_1\cdots h_{d-1})^{-1},1)(1,\dots,1,h_1h_2\cdots h_d)\\
    &=(h_1,h_1^{-1},1,\dots, 1)(1,h_1h_2,(h_1h_2)^{-1},1,\dots, 1)\cdots\\
    &\qquad\dots(1,\dots,1,h_1h_2\cdots h_{d-1},(h_1h_2\cdots h_{d-1})^{-1},1)(1,\dots,1,h_1h_2\cdots h_d).
\end{align*}
By (\ref{eq:x,inverse}), the first $d-1$ elements of this product lie in $\langle \langle g \rangle \rangle$. As $h_1\cdots h_d \in \Aut\T'$ by \cref{Theorem: (AutT)'=ker(P)}, it follows by (\ref{eq:tauconjugate2}) and \cref{Lemma: Circular} that $(1,\dots,1,h_1\cdots h_d) \in \langle \langle g \rangle \rangle$. Therefore  $h\in \langle\langle g\rangle\rangle$, as required.
\end{proof}

\begin{lem}\label{Lemma: g not in AutT' then closure g is AutT}
Let $\T$ be the $d$-adic tree, with $d\geq 3$. If $g\notin(\Aut\T)'$, then $\langle \langle g \rangle \rangle = \Aut\T$.
\end{lem}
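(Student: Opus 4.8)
The plan is to establish two facts about $H:=\langle\langle g\rangle\rangle$ — that $H$ surjects onto the abelianization $\Aut\T/(\Aut\T)'$, and that $H\supseteq(\Aut\T)'$ — since together these force $H=\Aut\T$. The engine for both is a single observation that I expect to be the crux of the whole argument. Since $g\notin(\Aut\T)'=\ker P$ by \cref{Theorem: (AutT)'=ker(P)}, some coordinate $\varepsilon_m\colon\Aut\T\to\mathbb Z_2$ of $P$ satisfies $\varepsilon_m(g)=1$, and $\varepsilon_m$ is a group homomorphism. For any involution $z\in\Aut\T$ the assignment $1\mapsto z$ defines a homomorphism $\mathbb Z_2\to\Aut\T$, so the composite $\varphi_z\colon\Aut\T\xrightarrow{\varepsilon_m}\mathbb Z_2\to\Aut\T$ is an endomorphism with $\varphi_z(g)=z$. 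Hence $H$ contains \emph{every} involution of $\Aut\T$. Spotting this factorization through a level-parity homomorphism is the one genuinely clever step; everything afterwards is assembling the lemmas of this section.

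For the abelianization I would invoke the construction inside \cref{Lemma: coset contains elements of order two}: for every sequence $w\in\prod_{i=0}^\infty\mathbb Z_2$ it produces, when $d\ge 3$, an element $a$ of order $2$ with $P(a)=w$. By the observation above each such involution $a$ lies in $H$, so $P(H)=\Imm P$, that is $H\,(\Aut\T)'=\Aut\T$. This is the only point where the hypothesis $d\ge 3$ enters, as it must in view of the separate treatment of the binary tree.

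To capture $(\Aut\T)'$ I would first note that for every $x\in\Aut\T$ both $(x,x^{-1},1,\dots,1)(1\,2)$ and $(1,\dots,1)(1\,2)$ are involutions — a direct check using \eqref{f1} — and therefore both lie in $H$ by the observation; their product is $(x,x^{-1},1,\dots,1)$, so $(x,x^{-1},1,\dots,1)\in H$ for all $x$. Applying \cref{Lemma: Playing with endomorphic and normal closure of g}(iii) to this element (with exponents $1$ and $-1$) yields $(x,x,1,\dots,1)\in H$ for all $x$, which is precisely the hypothesis of \cref{Lemma: Split}; that lemma then gives $(\Aut\T)'\cap\St(1)\le H$. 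Finally every rooted $3$-cycle is a product of two rooted transpositions, and rooted transpositions are involutions, so $\Alt(d)\le H$; by \cref{Remark: Description of AutT'} we get $(\Aut\T)'=\big((\Aut\T)'\cap\St(1)\big)\cdot\Alt(d)\le H$. Combined with $H\,(\Aut\T)'=\Aut\T$ from the previous paragraph, this yields $H=\Aut\T$, as required. The main obstacle is the initial observation; once involutions are known to lie in $H$, the remaining steps are routine applications of the preceding lemmas.
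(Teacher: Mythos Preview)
Your proof is correct and follows essentially the same route as the paper: you both use a level-parity homomorphism $\varepsilon_m$ (equivalently, the paper's index-two subgroup $H_g=\ker\varepsilon_m$) to manufacture an endomorphism sending $g$ to an arbitrary involution, then invoke \cref{Lemma: coset contains elements of order two}, produce the elements $(x,x^{-1},1,\dots,1)$ from involutions, feed \cref{Lemma: Split}, and close with $\Alt(d)$ via \cref{Remark: Description of AutT'}. The only cosmetic differences are that the paper obtains $(h,h^{-1},1,\dots,1)$ by conjugating the rooted transposition $\tau$ rather than by multiplying two explicit involutions, and that the paper gets $\Alt(d)$ from the normal closure $\langle\tau\rangle^{\Sym(d)}$ rather than from products of rooted transpositions; your extra half-line converting $(x,x^{-1},1,\dots,1)$ to $(x,x,1,\dots,1)$ via \cref{Lemma: Playing with endomorphic and normal closure of g}(iii) before invoking \cref{Lemma: Split} is in fact a small precision that the paper glosses over.
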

\begin{proof}
Since $g \notin (\Aut\T)'$, by \cref{Theorem: (AutT)'=ker(P)} we have $P(g)=(\varepsilon_i)_{i\in \mathbb{N}_0}$, with at least one $\varepsilon_i=1$. Let $k\in \mathbb{N}_0$ be the least integer such that $\varepsilon_k=1$, and define $H_g$ to be the set of all elements of $\Aut\T$ whose portrait has even product of permutations at level $k$. Therefore  $H_g$ is a normal subgroup of $\Aut\T$ and $g\not \in H_g$. Furthermore, $|\Aut\T:H_g|=2$. Indeed, since the map $$x(\Aut\T)'\in{\Aut\T}/{(\Aut\T)'}\mapsto P(x)\in\mathbb Z_2^{\mathbb N_0}$$ is an isomorphism, we have
\begin{align*}
    \Aut\T/H_g\cong \frac{\Aut\T/(\Aut\T)'}{H_g/(\Aut\T)'}\cong \frac{\mathbb{Z}_2\times\mathbb{Z}_2\times\mathbb{Z}_2\times\dots}{\{0\}\times\mathbb{Z}_2\times\mathbb{Z}_2\times\dots}\cong\mathbb{Z}_2.
\end{align*}
Now let $a\in \Aut\T$ be an element of order 2, and define, for all $h\in\Aut\T$,  
\begin{align*}
    \varphi_a(h)=\begin{cases}
        1\quad \text{if $h\in H_g$,}\\
        a\quad \text{if $h\not\in H_g$.}
    \end{cases}
\end{align*}
It is easy to see that $\varphi_a\in\End(\Aut\T)$ because $a$ has order 2. Since $g\not \in H_g$, for every element $a\in\Aut\T$ of order 2 we obtain $\varphi_a(g)=a\in\langle\langle g\rangle\rangle$. So the subgroup $\langle\langle g\rangle\rangle$ contains all elements of order 2 of $\Aut \T$. Since $d \geq 3$, by \cref{Lemma: coset contains elements of order two} it also contains a complete system of representatives of the cosets of $(\Aut\T)'$ in $\Aut\T$. Thus it only remains to show that  $(\Aut\T)'\leq \langle\langle g\rangle\rangle$. Note that $\tau=(1,\dots,1)(1\ 2)$ is an element of order 2 and so it is contained in  $\langle\langle g\rangle\rangle$. Therefore for all $h\in \Aut\T$ we get 
\begin{align*}
\tau^{(h^{-1},1,\dots,1)}\tau=(h,h^{-1},1,\dots,1)\in \langle\langle g\rangle\rangle,
\end{align*}
so $(\Aut\T)'\cap \St(1)\leq \langle\langle g\rangle\rangle$ by \cref{Lemma: Split}.
Since
\begin{align*}
    \langle\langle g\rangle\rangle\geq \langle\langle \tau\rangle\rangle\geq \langle\tau\rangle^{\Aut\T}\geq \langle\tau\rangle^{\Sym(d)}=\Sym(d)>\Alt(d),
\end{align*}
by \cref{Remark: Description of AutT'} this yields that $(\Aut\T)'\leq \langle\langle g\rangle\rangle$, concluding the proof.  
\end{proof}

\begin{lem}\label{Lemma: first two coordinates same element and the remaining coordinates trivial}
Let $\T$ be the $d$-adic tree, with $d\geq 3$. If $h=(h_1, h_1, 1, \dots, 1)$ where $h_1 \notin (\Aut\T)'$, then
$
\langle \langle h \rangle \rangle \geq (\Aut\T)' \cap \st(1).
$
\end{lem}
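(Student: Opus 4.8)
The plan is to reduce everything to Lemma \ref{Lemma: Split}: if I can show that $\langle\langle h\rangle\rangle$ contains the entire diagonal set $\{(x,x,1,\dots,1)\mid x\in\Aut\T\}$, then Lemma \ref{Lemma: Split} (applied with $g=h$) immediately yields $\langle\langle h\rangle\rangle\geq(\Aut\T)'\cap\St(1)$, which is precisely the conclusion. So the whole argument concentrates on producing the diagonal.

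The starting observation is that since $h_1\notin(\Aut\T)'$ and $d\geq 3$, Lemma \ref{Lemma: g not in AutT' then closure g is AutT} gives $\langle\langle h_1\rangle\rangle=\Aut\T$. Consequently, every $x\in\Aut\T$ can be written as a product $x=\varphi_1(h_1)\cdots\varphi_s(h_1)$ for suitable endomorphisms $\varphi_1,\dots,\varphi_s\in\End(\Aut\T)$. The point is that the endomorphic closure of the single section $h_1$ already exhausts $\Aut\T$, and this is what will let me realise an arbitrary diagonal entry.

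The key step is to transport such an expression into the first two coordinates simultaneously, using the coordinatewise endomorphisms $\overline\varphi$ introduced in \eqref{Equation: How endomorphism work}. For each $\varphi_j$ one has $\overline{\varphi_j}(h)=(\varphi_j(h_1),\varphi_j(h_1),1,\dots,1)$, because $h\in\St(1)$ and endomorphisms fix the identity. As each $\overline{\varphi_j}$ lies in $\End(\Aut\T)$, these elements all belong to $\langle\langle h\rangle\rangle$; multiplying them (recalling that multiplication inside $\St(1)$ is coordinatewise by \eqref{f1}) gives
\[
\overline{\varphi_1}(h)\cdots\overline{\varphi_s}(h)=\big(\varphi_1(h_1)\cdots\varphi_s(h_1),\,\varphi_1(h_1)\cdots\varphi_s(h_1),\,1,\dots,1\big)=(x,x,1,\dots,1).
\]
Hence $(x,x,1,\dots,1)\in\langle\langle h\rangle\rangle$ for every $x\in\Aut\T$, which is exactly the hypothesis needed to invoke Lemma \ref{Lemma: Split}.

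I do not expect a genuine obstacle here. The only points requiring care are that $\overline\varphi$ is indeed an endomorphism (already checked in the proof of Lemma \ref{Lemma: Playing with endomorphic and normal closure of g}) and that, because $h$ carries the same entry $h_1$ in both of its first two slots, each $\overline{\varphi_j}$ preserves this repeated structure, so the diagonal pattern survives the product. The conceptual content is simply that $\langle\langle h_1\rangle\rangle=\Aut\T$ promotes the two equal sections of $h$ into an arbitrary diagonal element $(x,x,1,\dots,1)$ of $\langle\langle h\rangle\rangle$, after which Lemma \ref{Lemma: Split} finishes the proof.
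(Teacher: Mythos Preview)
Your proof is correct and follows essentially the same approach as the paper: use $\langle\langle h_1\rangle\rangle=\Aut\T$ from Lemma~\ref{Lemma: g not in AutT' then closure g is AutT}, push endomorphisms through via $\overline{\varphi}$ to obtain the full diagonal $\{(x,x,1,\dots,1)\mid x\in\Aut\T\}$ inside $\langle\langle h\rangle\rangle$, and then invoke Lemma~\ref{Lemma: Split}. You spell out the product $\overline{\varphi_1}(h)\cdots\overline{\varphi_s}(h)$ a bit more explicitly than the paper does, but the argument is the same.
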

\begin{proof}
    For all $\varphi\in\End(\Aut\T)$, let consider the endomorphism $\overline{\varphi}$ defined in (\ref{Equation: How endomorphism work}). Clearly $\langle\langle h\rangle\rangle\ni\overline{\varphi}(h)=(\varphi(h_1),\varphi(h_1),1,\dots,1)$. Since $h_1 \notin (\Aut\T)'$, by \cref{Lemma: g not in AutT' then closure g is AutT} we have  $\langle\langle h_1\rangle\rangle=\Aut\T$. 
    Hence    $\langle \langle h \rangle \rangle \geq \{(a,a,1,\dots,1) \mid a\in \Aut\T\}$, and the result follows by \cref{Lemma: Split}.
\end{proof}

\begin{lem}\label{Lemma: if g in the derivative of autT but not in the first level}Let $\T$ be the $d$-adic tree with $d\not=2,4$. If $g\in (\Aut\T)'\setminus\st(1)$, then $\langle\langle g\rangle\rangle=(\Aut\T)'$.
\end{lem}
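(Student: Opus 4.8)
The plan is to prove the two inclusions separately. The inclusion $\langle\langle g\rangle\rangle\le(\Aut\T)'$ is immediate: the derived subgroup is fully invariant and contains $g$, so it contains every endomorphic image of $g$ and hence all of $\langle\langle g\rangle\rangle$. All the work lies in the reverse inclusion $(\Aut\T)'\le\langle\langle g\rangle\rangle$, and by \cref{Remark: Description of AutT'} it is enough to establish the two facts $\Alt(d)\le\langle\langle g\rangle\rangle$ and $(\Aut\T)'\cap\St(1)\le\langle\langle g\rangle\rangle$.

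First I would extract the top label of $g$. Writing $g=(g_1,\dots,g_d)\sigma$, the hypothesis $g\in(\Aut\T)'\setminus\St(1)$ forces $\sigma\in\Alt(d)$ with $\sigma\neq1$. The projection $\pi\colon(x_1,\dots,x_d)\rho\mapsto\rho$, that is the composition of the quotient map $\Aut\T\to\Aut\T/\St(1)\cong\Sym(d)$ with the rooted embedding $\Sym(d)\hookrightarrow\Aut\T$, is an endomorphism of $\Aut\T$ by \eqref{f1}, so $\sigma=\pi(g)\in\langle\langle g\rangle\rangle$. Since $\langle\langle g\rangle\rangle$ is fully invariant, hence normal, and since $\Sym(d)$ sits inside $\Aut\T$ as rooted automorphisms, the whole normal closure $\langle\sigma\rangle^{\Sym(d)}$ is contained in $\langle\langle g\rangle\rangle$. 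This is exactly the point at which the hypothesis $d\neq2,4$ is used: for $d=3$ and for $d\ge5$ the group $\Alt(d)$ is simple, so the normal closure in $\Sym(d)$ of \emph{any} nontrivial $\sigma\in\Alt(d)$ equals $\Alt(d)$; this fails only for $d=4$, where a product of two disjoint transpositions generates merely the Klein four-group. Thus $\Alt(d)\le\langle\langle g\rangle\rangle$.

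Next I would produce the first-level part. As $\Alt(d)\le\langle\langle g\rangle\rangle$ and $d\ge3$, the rooted $3$-cycle $\rho=(1\ 2\ 3)$ lies in $\langle\langle g\rangle\rangle$. For an arbitrary $h\in\Aut\T$, conjugating $\rho$ by $(h^{-1},1,\dots,1)\in\St(1)$ and applying the conjugation formula \eqref{f3} gives $\rho^{(h^{-1},1,\dots,1)}=(h,h^{-1},1,\dots,1)\rho$, whence $\rho^{(h^{-1},1,\dots,1)}\rho^{-1}=(h,h^{-1},1,\dots,1)\in\langle\langle g\rangle\rangle$ by normality. Flipping the sign in the second coordinate via \cref{Lemma: Playing with endomorphic and normal closure of g}(iii) yields $(h,h,1,\dots,1)\in\langle\langle g\rangle\rangle$ for every $h\in\Aut\T$. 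Then \cref{Lemma: Split} gives $(\Aut\T)'\cap\St(1)\le\langle\langle g\rangle\rangle$. Combining this with $\Alt(d)\le\langle\langle g\rangle\rangle$ through \cref{Remark: Description of AutT'} produces $(\Aut\T)'\le\langle\langle g\rangle\rangle$, and with the easy inclusion this gives the desired equality.

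I expect the main obstacle to be the step $\Alt(d)\le\langle\langle g\rangle\rangle$: on the one hand one must justify that conjugating $\sigma$ by rooted permutations keeps one inside $\langle\langle g\rangle\rangle$ (which rests on the full invariance of the endomorphic closure forcing normality), and on the other hand one must recognise that $d=4$ is the single degree $\ge3$ for which a nontrivial even permutation can fail to normally generate $\Alt(d)$ inside $\Sym(d)$. The remaining steps are routine: a single application of the conjugation formula \eqref{f3} and direct invocations of the already established \cref{Lemma: Split} and \cref{Lemma: Playing with endomorphic and normal closure of g}. It is worth emphasising that the argument never uses the individual sections $g_i$, only that the top label $\sigma$ is a nontrivial even permutation.
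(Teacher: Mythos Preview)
Your proof is correct and follows essentially the same route as the paper: both arguments extract the top label $\sigma$ via the endomorphism $(x_1,\dots,x_d)\rho\mapsto\rho$, use that for $d\neq 2,4$ the normal closure of a nontrivial even permutation in $\Sym(d)$ is all of $\Alt(d)$, then conjugate the rooted $3$-cycle by a first-level element to obtain $(h,h^{-1},1,\dots,1)$, flip the sign via \cref{Lemma: Playing with endomorphic and normal closure of g}(iii), and finish with \cref{Lemma: Split} and \cref{Remark: Description of AutT'}. The only cosmetic difference is the choice of conjugator for the $3$-cycle (you use $(h^{-1},1,\dots,1)$, the paper uses $(a^{-1},a,1,\dots,1)$), which does not affect the argument.
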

\begin{proof}
It is clear that $\langle\langle g\rangle\rangle\leq (\Aut\T)'$ since $g\in (\Aut\T)'$ and the subgroup $(\Aut\T)'$ is fully invariant. On the other hand, the  map defined by the rule 
\begin{align*}
\varphi\big((h_1,\dots,h_d)\sigma\big)=(1,\dots,1)\sigma
\end{align*}
is an endomorphism of $\Aut\T$ whose kernel is $\St(1)$. Write $g=(g_1,\dots,g_d)\sigma$. Then $\sigma$ is an even permutation and $\langle\langle g\rangle\rangle$ contains the  $\varphi((1,\dots,1)\sigma)$ which may be identified by $\sigma$. Since $d \neq 4$ we have $\langle \sigma \rangle^{\Sym(d)}=\Alt(d)$, and hence
$$
\langle\langle g\rangle\rangle\geq\langle\langle\sigma\rangle\rangle\geq \langle \sigma \rangle^{\Aut\T}\geq \langle \sigma \rangle^{\Sym(d)}=\Alt(d).
$$
In particular, $h=(1,\dots,1)(123)\in \langle\langle g\rangle\rangle$. For all elements $a\in \Aut \T$, set $x=(a^{-1},a,1,\dots,1)$. Then
$h^{x}h^{-1}=(a,a^{-1}, 1, \dots, 1)\in\langle\langle g\rangle\rangle.$ Now \cref{Lemma: Playing with endomorphic and normal closure of g} (iii) yields $(a,a, 1, \dots, 1)\in\langle\langle g\rangle\rangle$ for all $a\in\Aut\T$, and the result follows by \cref{Lemma: Split} and \cref{Remark: Description of AutT'}.
\end{proof}

\section{Verbal subgroups}
\noindent In this section we will determine all verbal subgroups of $\Aut\T$. To this end, we first define inductively the following subgroups:  
\begin{align*}
    M_0&= \Aut\T,\\
    M_1&= (\Aut\T)',\\
    M_2&= (\Aut\T)'\cap \st(1),\\
M_{k+2}&=M_k\times \stackrel{d}{\dots}\times M_k,
\end{align*}
for all $k\geq 1$.
Since $\Aut\T'=\ker P$, it follows that
\begin{align*}
M_3&=(\Aut\T)'\times  \stackrel{d}{\dots}\times(\Aut\T)'\\&=\{(g_1,\dots,g_d)\in \st(1)\ |\ g_i\in (\Aut\T)'\}\leq M_2,
\end{align*}
so it is clear that the subgroups $M_k$ form a decreasing chain. In particular we have  
\begin{align*}
M_{2k+1}&=(\Aut\T)'\times \stackrel{d^k}{\dots}\times(\Aut\T)'\leq \st(k),\\
    M_{2k+2}&=\big((\Aut\T)'\cap \st(1)\big)\times \stackrel{d^k}{\dots}\times \big((\Aut\T)'\cap \st(1)\big)\\
    &=M_{2k+1}\cap \st(k+1).
\end{align*}
Therefore $$\bigcap_{k\geq0}M_k\leq\bigcap_{k\geq1}\St(k)=\{1\}.$$
It is an immediate consequence of \cref{Lemma: g not in AutT' then closure g is AutT} and \cref{Lemma: if g in the derivative of autT but not in the first level} that $M_k$ is a fully invariant subgroup of $\Aut\T$, for all $k\geq0$.
\begin{pr}\label{clauseI}
Let $\T$ be the $d$-adic tree, with $d\geq 3$ and odd, and let  $g=(g_1, \dots ,g_{d^k}) \in M_k \setminus M_{k+1}$. Then $M_k \leq \langle \langle g \rangle \rangle$.
\end{pr}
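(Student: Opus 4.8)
The plan is to argue by induction on $k$, at each stage reducing the statement for $M_k$ to the one for $M_{k-2}$. The cases $k=0$ and $k=1$ are exactly \cref{Lemma: g not in AutT' then closure g is AutT} and \cref{Lemma: if g in the derivative of autT but not in the first level}, the latter being available because $d$ odd forces $d\neq 2,4$. For every $k\geq 2$ one has $M_k\leq \St(1)$, so I would write $g=(g_1,\dots,g_d)$ through $\psi_1$. Since $g\notin M_{k+1}=M_{k-1}\times\dots\times M_{k-1}$, at least one section satisfies $g_i\notin M_{k-1}$; applying a suitable power of the shift endomorphism $\theta$ of \cref{Lemma: Circular} and using $\langle\langle\theta^{j}(g)\rangle\rangle\leq\langle\langle g\rangle\rangle$ from \cref{Lemma: Playing with endomorphic and normal closure of g}(i), I may assume without loss of generality that $g_1\notin M_{k-1}$.

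For the inductive step $k\geq 3$ I would use $M_k=M_{k-2}\times\dots\times M_{k-2}$, so that $g_1\in M_{k-2}\setminus M_{k-1}$. Picking $c_1\in\Aut\T$ and forming the commutator $h=[g,(c_1,1,\dots,1)]\in\langle\langle g\rangle\rangle$, formula \eqref{f4} gives $h=([g_1,c_1],1,\dots,1)$, since both factors lie in $\St(1)$. As $M_{k-2}$ is normal, $[g_1,c_1]\in M_{k-2}$ for every $c_1$; granting that $c_1$ can be chosen with $[g_1,c_1]\notin M_{k-1}$, the inductive hypothesis applied to $[g_1,c_1]\in M_{k-2}\setminus M_{k-1}$ yields $M_{k-2}\leq\langle\langle[g_1,c_1]\rangle\rangle$. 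Then \cref{Lemma: Playing with endomorphic and normal closure of g}(iv) gives $\langle\langle h\rangle\rangle\geq\langle\langle[g_1,c_1]\rangle\rangle\times\dots\times\langle\langle[g_1,c_1]\rangle\rangle\geq M_{k-2}\times\dots\times M_{k-2}=M_k$, and $M_k\leq\langle\langle g\rangle\rangle$ follows from $h\in\langle\langle g\rangle\rangle$ together with \cref{Lemma: Playing with endomorphic and normal closure of g}(i).

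The hard part will be the deferred claim, which is the real crux: if $x\in M_j\setminus M_{j+1}$ then $[x,c]\notin M_{j+1}$ for some $c\in\Aut\T$; equivalently, for every $j\geq 1$ the conjugation action of $\Aut\T$ on $M_j/M_{j+1}$ has no nontrivial fixed points. I would prove this by a parallel induction on $j$. For $j=1$, $M_1/M_2\cong\Alt(d)$ via the root label, the action is $\Sym(d)$-conjugation, and a nontrivial fixed element would lie in $Z(\Sym(d))=1$ (this uses $d\geq 3$). For $j=2$, $M_2/M_3$ is the kernel of the product map $(\Aut\T/(\Aut\T)')^{d}\to\Aut\T/(\Aut\T)'$, on which $\Aut\T$ acts merely by permuting the $d$ coordinates, conjugation being trivial on the abelianization; a fixed vector has all entries equal to a single $\bar v$ with $\bar v^{d}=1$, and since $\Aut\T/(\Aut\T)'$ is an elementary abelian $2$-group while $d$ is odd we get $\bar v^{d}=\bar v$, whence $\bar v=1$. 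This is precisely where oddness of $d$ is indispensable. For $j\geq 3$ one has $M_j/M_{j+1}\cong(M_{j-2}/M_{j-1})^{d}$, with $\Aut\T$ permuting the $d$ blocks and acting on each block through a first-level section; restricting to conjugators fixing the blocks, every entry of a fixed vector must be $\Aut\T$-fixed in $M_{j-2}/M_{j-1}$, hence trivial by induction.

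It remains to treat the base case $k=2$ by hand: here $M_2$ is not a direct power, and isolating a single section would produce a value in $(\Aut\T)'$, overshooting the target, so I would instead manufacture a diagonal element. With $g_1\notin(\Aut\T)'$, fix an index-$2$ subgroup $H\geq(\Aut\T)'$ with $g_1\notin H$ and the associated endomorphism $\varphi_a$ (from the proof of \cref{Lemma: g not in AutT' then closure g is AutT}) for the order-$2$ element $a=(1,\dots,1)(1\,2)\notin(\Aut\T)'$. Then $\overline{\varphi_a}(g)=(a^{\varepsilon_1},\dots,a^{\varepsilon_d})\in\langle\langle g\rangle\rangle$ has entries in $\{1,a\}$, with $\varepsilon_1=1$ and $\sum_i\varepsilon_i$ even because $\varphi_a$ kills $g_1\cdots g_d\in(\Aut\T)'\leq H$. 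Permuting coordinates by conjugation and multiplying such vectors is $\mathbb{F}_2$-linear algebra on even-weight $0/1$-patterns; since $d$ is odd the support has size at most $d-1$, leaving room to reduce any even-weight pattern to weight two, so that $(a,a,1,\dots,1)\in\langle\langle g\rangle\rangle$. As $a\notin(\Aut\T)'$, \cref{Lemma: first two coordinates same element and the remaining coordinates trivial} together with \cref{Lemma: Playing with endomorphic and normal closure of g}(i) then gives $M_2\leq\langle\langle g\rangle\rangle$, completing the induction.
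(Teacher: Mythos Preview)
Your argument is correct and genuinely different from the paper's. The paper does \emph{not} induct: it decomposes $g$ directly at level $\lfloor k/2\rfloor$ into $d^{\lfloor k/2\rfloor}$ sections and treats the two parities separately. For $g\in M_{2k}\setminus M_{2k+1}$ it uses oddness of $d$ (hence of $d^k$) together with $\prod_i g_i\in\ker P$ to locate two sections $g_1,g_2$ with $P(g_1)\neq P(g_2)$, manufactures $(g_1g_2,g_1g_2,1,\dots,1)$ via \cref{Lemma: Playing with endomorphic and normal closure of g}(ii)--(iii), and invokes \cref{Lemma: first two coordinates same element and the remaining coordinates trivial}. For $g\in M_{2k+1}\setminus M_{2k+2}$ it isolates a section $g_1\notin\St(1)$, forms $(g_1g_1^{x},1,\dots,1)$ with $g_1g_1^{x}\notin\St(1)$ using $Z(\Sym(d))=1$, and applies \cref{Lemma: if g in the derivative of autT but not in the first level} together with \cref{Lemma: Playing with endomorphic and normal closure of g}(iv). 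Your route instead peels off one level at a time and packages exactly these two facts---triviality of $Z(\Sym(d))$ and the odd-$d$ parity argument---as the $j=1$ and $j=2$ base cases of the fixed-point-free claim for the $\Aut\T$-action on $M_j/M_{j+1}$; the paper's two separate tricks thus become a single structural lemma plus one commutator $[g,(c_1,1,\dots,1)]$. The trade-off is that the paper avoids the secondary induction and the somewhat delicate $k=2$ base case you handle via $\overline{\varphi_a}$ and $\mathbb{F}_2$-linear algebra on even-weight patterns, while your approach gives a uniform reduction and a reusable statement about the factors $M_j/M_{j+1}$ that the paper never isolates.
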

\begin{proof}
The case $k=0$ is \cref{Lemma: g not in AutT' then closure g is AutT}, while the case $k = 1$ is \cref{Lemma: if g in the derivative of autT but not in the first level}. Let now $k>1$.

First suppose that $g \in M_{2k}\setminus M_{2k+1}$, with $k\geq 1$. Then $g_i\not\in (\Aut\T)'$ for some $i$. Using (ii) of \cref{Lemma: Playing with endomorphic and normal closure of g} we may assume, without loss of generality, that $i = 1$, so  $g_1\not\in \ker P$. Furthermore, we have $g\in \ker P$ and hence $g_1\dots g_{d^k}\in \ker P$. Now since $d$ is odd, there exists some  $j\in\{2,\dots,d^k\}$ such that $P(g_1)\not=P(g_j)$. As before, without loss of generality, we may assume that $j=2$. Therefore $g_1g_2\not\in (\Aut\T)'$. Hence by (iii) and (ii) of \cref{Lemma: Playing with endomorphic and normal closure of g} we obtain $$(g_1,g_2^{-1},g_3,\dots,g_{d^k})(g_2,g_1^{-1},g_3^{-1},\dots,g_{d^k}^{-1})=(g_1g_2,(g_1g_2)^{-1},1,\dots,1) \in \langle \langle g \rangle\rangle,
$$
and so
$
    (g_1g_2,g_1g_2,1,\dots,1)\in \langle\langle g\rangle\rangle.
$
It follows by Lemma \ref{Lemma: first two coordinates same element and the remaining coordinates trivial} that 
\begin{align*}
    \langle\langle g\rangle\rangle\geq ((\Aut\T)'\times\stackrel{d^{k-1}}{\dots}\times(\Aut\T)')\cap\st(k)=M_{2k}.
\end{align*}
Suppose now that $g\in M_{2k+1}\setminus M_{2k+2}$. Then $g_i\in (\Aut\T)'$ for all $i\in\{1,\dots,d^k\}$, and there exists $j\in\{1,\dots,d^k\}$ such that $g_j\not\in \St(1)$. Without loss of generality we may assume that $j=1$. Then, for all $x\in\Aut\T$, using Lemma \ref{Lemma: Playing with endomorphic and normal closure of g} we obtain
\begin{align*}
    (g_1,g_2,\dots,g_{d^k})(g_1^x,g_2^{-1},\dots,g_{d^k}^{-1})=(g_1g_1^{x},1,\stackrel{d^{k}-1}{\dots}, 1)\in \langle\langle g\rangle\rangle.
\end{align*}
Since $d\geq3$ the center of $\Sym(d)$ is trivial. As $g_1 \not\in \St(1)$, there exists some $x\in \Aut\T$ such that $g_1g_1^x\not\in \St(1)$. Now by Lemma \ref{Lemma: if g in the derivative of autT but not in the first level} it follows that
$\langle\langle g_1g_1^x\rangle\rangle \geq (\Aut\T)'$. And finally, by \cref{Lemma: Playing with endomorphic and normal closure of g}, we obtain
$$\langle\langle g\rangle\rangle \geq (\Aut\T)'\times\stackrel{d^k}{\dots}\times(\Aut\T)'=M_{2k+1},$$
concluding the proof.
\end{proof}

\begin{lem}\label{M1=M02}Let $\T$ be the $d$-adic tree. Then $(\Aut\T)^2 =(\Aut\T)'$, in other words $M_0^2=M_1$.
\end{lem}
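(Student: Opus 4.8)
The plan is to prove the two inclusions $(\Aut\T)' \le (\Aut\T)^2$ and $(\Aut\T)^2 \le (\Aut\T)'$ separately, both of which reduce to elementary exponent-$2$ observations about abelian quotients, the essential input being that the abelianization of $\Aut\T$ has already been identified in \cref{Theorem: (AutT)'=ker(P)}.

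For the inclusion $(\Aut\T)' \le (\Aut\T)^2$ I would invoke the general fact, valid in any group $G$, that $G' \le G^2$. Indeed, $G^2$ is a verbal subgroup, hence fully invariant and in particular normal, so the quotient $G/G^2$ makes sense; moreover it is generated by the images of squares, so every one of its elements squares to the identity. Since a group of exponent $2$ is abelian, the quotient $G/G^2$ is abelian, whence $G' \le G^2$. Applying this with $G=\Aut\T$ gives $(\Aut\T)' \le (\Aut\T)^2$.

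For the reverse inclusion $(\Aut\T)^2 \le (\Aut\T)'$ the key input is the precise structure of the abelianization. By \cref{Theorem: (AutT)'=ker(P)} we have $(\Aut\T)' = \ker P$, and since $P$ is an epimorphism onto $\mathbb{Z}_2^{\mathbb{N}_0}$ we obtain $\Aut\T/(\Aut\T)' \cong \prod_{i=0}^\infty \mathbb{Z}_2$, which is of exponent $2$. Hence for every $g\in\Aut\T$ the image of $g^2$ in the abelianization equals $2\,P(g)=0$, so that $g^2\in\ker P=(\Aut\T)'$. As $(\Aut\T)^2$ is generated by such squares, this yields $(\Aut\T)^2 \le (\Aut\T)'$.

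Combining the two inclusions gives $(\Aut\T)^2=(\Aut\T)'=M_1$. I expect essentially no obstacle: the statement is a formal consequence of having already recognized the abelianization as an elementary abelian $2$-group. The only point requiring a little care is recording the elementary lemma that groups of exponent $2$ are abelian, which is precisely what supplies the slightly less transparent inclusion $G'\le G^2$.
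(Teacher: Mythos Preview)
Your proof is correct and follows essentially the same route as the paper's: both argue that $\Aut\T/(\Aut\T)'\cong\prod\mathbb{Z}_2$ has exponent $2$ (giving $(\Aut\T)^2\le(\Aut\T)'$) and that $G/G^2$ has exponent $2$ hence is abelian (giving $G'\le G^2$). The phrase ``generated by the images of squares'' is slightly garbled---what you need is simply that every square lies in $G^2$ by definition, so the quotient has exponent at most $2$---but the logic is sound and matches the paper.
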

\begin{proof}
Note that $\Aut\T/(\Aut\T)'$ has exponent 2, since $(\Aut\T)'=\Ker P$ and $\Aut\T/\Ker P\cong \prod \mathbb{Z}/2\mathbb{Z}$. Therefore $(\Aut\T)^2\leq(\Aut\T)'$. On the other hand $(\Aut\T)^2\geq (\Aut\T)'$, because $\Aut\T/(\Aut\T)^2$ has exponent 2, and hence is abelian. 
\end{proof}

\begin{lem}\label{M2=M1ed}Let $\T$ be the $d$-adic tree, with $d\geq 3$ and odd. Then $M_2=M_1^{e_d}$, where $e_d$ is the exponent of the group $\Alt(d)$.
\end{lem}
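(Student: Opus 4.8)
The plan is to establish the two inclusions $M_1^{e_d}\le M_2$ and $M_2\le M_1^{e_d}$ separately, the first being routine and the second resting on \cref{clauseI}. For $M_1^{e_d}\le M_2$, I would first record that $M_1/M_2\cong\Alt(d)$: by \cref{Theorem: (AutT)'=ker(P)} we have $M_1=\ker P$, whose elements are exactly the $(g_1,\dots,g_d)\sigma$ with $\sigma\in\Alt(d)$, so the projection $\Aut\T\to\Aut\T/\St(1)\cong\Sym(d)$ maps $M_1$ onto $\Alt(d)$ with kernel $M_1\cap\St(1)=M_2$. Since $e_d$ is by definition the exponent of $\Alt(d)$, every $g\in M_1$ satisfies $g^{e_d}\in M_2$, and as such powers generate $M_1^{e_d}$ we get $M_1^{e_d}\le M_2$.

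The reverse inclusion proceeds in two moves. First, $M_1^{e_d}$ is verbal, hence fully invariant: by \cref{Theorem: (AutT)'=ker(P)} every element of $M_1=(\Aut\T)'$ is a single commutator, so $M_1^{e_d}=\langle g^{e_d}\mid g\in M_1\rangle$ coincides with the verbal subgroup determined by the word $[x,y]^{e_d}$; consequently $\langle\langle w\rangle\rangle\le M_1^{e_d}$ for every $w\in M_1^{e_d}$. Second --- the step I expect to be the main obstacle --- I must exhibit a single witness $w\in M_1^{e_d}$ lying in $M_2\setminus M_3$. Granting this, \cref{clauseI} with $k=2$ gives $M_2\le\langle\langle w\rangle\rangle\le M_1^{e_d}$, completing the proof.

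To build the witness I take $w=(m\alpha)^{e_d}$ for suitable $m=(m_1,\dots,m_d)\in\St(1)$ and $\alpha\in\Alt(d)$; this lies in $M_1^{e_d}$ as soon as $m\alpha\in M_1$, i.e. $\sum_j P(m_j)=0$. Writing $a=v_2(e_d)$ for the $2$-adic valuation of $e_d$, the power formula \eqref{power} together with $\alpha^{e_d}=1$ shows that the coordinate of $w$ at an index $j$ lying in an $\alpha$-orbit $O$ of length $\ell$ equals an $(e_d/\ell)$-th power of a product running once through $\{m_k:k\in O\}$; applying $P$, this coordinate lies outside $(\Aut\T)'$ exactly when $(e_d/\ell)\sum_{k\in O}P(m_k)\ne0$. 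Thus it suffices to choose $\alpha$ with an orbit of length $\ell$ satisfying $v_2(\ell)=a$ (so $e_d/\ell$ is odd), together with a second orbit to balance the global constraint, and to place two rooted transpositions $\tau=(1,\dots,1)(1\,2)$, for which $P(\tau)=(1,0,0,\dots)$, one in each orbit.

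The remaining point is purely arithmetic. For $d=3$ one has $e_3=3$ odd, and taking $\alpha$ trivial the element $g=(\tau,\tau,1)$ satisfies $g=g^{3}\in M_1^{e_3}$ and already lies in $M_2\setminus M_3$. For odd $d\ge5$ one has $a\ge1$, and I would verify $2^{a}\le d-2$: an element of $\Alt(d)$ realizing $v_2(\mathrm{ord})=a$ forces $2^a\le d$, while $2^a=d-1$ is impossible because a $(d-1)$-cycle is an odd permutation and hence not in $\Alt(d)$, so the only obstruction is ruled out (and $2^a=d$ is excluded by parity). Then $\alpha=(1\,2\,\cdots\,2^{a})(2^{a}+1\ \ 2^{a}+2)$ is an even permutation with the two required orbits, and placing $\tau$ at coordinate $1$ and at coordinate $2^{a}+1$ gives $m\alpha\in M_1$ and a coordinate of $w=(m\alpha)^{e_d}$ outside $(\Aut\T)'$, so $w\in M_2\setminus M_3$. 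The genuine difficulty is thus isolating the condition $v_2(\ell)=v_2(e_d)$ and checking it can always be met inside $\Alt(d)$ with room for a balancing orbit.
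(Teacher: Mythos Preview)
Your proposal is correct and follows essentially the same approach as the paper: both arguments establish $M_1^{e_d}\le M_2$ trivially and then construct an explicit $e_d$-th power of an element of $M_1$ witnessing the reverse inclusion, using a permutation whose orbit lengths match the $2$-adic valuation of $e_d$ together with rooted transpositions $\tau$ placed so as to balance the $P$-image. The only differences are cosmetic: the paper splits into the cases $d=2^k+1$ and $d=2^k+s$ and in the first case uses two $2^{k-1}$-cycles rather than your $2^{a}$-cycle plus a transposition, and at the end the paper unwinds through \cref{Lemma: first two coordinates same element and the remaining coordinates trivial} directly rather than invoking the packaged statement \cref{clauseI}; your arithmetic verification that $2^{a}\le d-2$ is the same content as the paper's case split.
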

\begin{proof}
    It is well known that for every odd integer $d\geq3$ there exists an odd integer $o_d$ such that
\begin{align*}
    e_d=\begin{cases}
        2^{k-1}o_d, \quad &\text{if \,}d=2^k+1,\\
        2^ko_d, \quad &\text{if \,}d=2^k+s,\ \text{for some \,}s\in \{2,\dots,2^k-1\}.
    \end{cases}
\end{align*}
By the definition of $e_d$, it is clear that $M_1^{e_d}\leq (\Aut\T)'\cap \St(1)=M_2$. So it remains to prove the reverse inclusion. There are two cases.

If $d=2^k+1$ then  $e_d=2^{k-1}o_d$. Write 
\begin{align*}
    g=(\tau,\underbrace{1,{\dots},1}_{2^{k-1}-1},\tau,\underbrace{1,{\dots},1}_{2^{k-1}-1},1))(1\ 2\ \dots\  2^{k-1})(2^{k-1}+1\ \dots\  2^k).
\end{align*}
Then $g\in M_1=(\Aut~\T)'$ for $\tau=(1,\dots,1)(1\ 2)$. On the other, it easily follows by (\ref{power}) that 
\begin{align*}
    g^{e_d}=(\underbrace{\tau,{\dots},\tau}_{2^k},1)^{o_d}=(\underbrace{\tau,{\dots},\tau}_{2^k},1),
\end{align*}
as $\tau^2 = 1$. Since 
\begin{align*}
 (\tau,\tau,\underbrace{1,{\dots},1}_{2^k-1})=(\tau,1,\underbrace{\tau,{\dots},\tau}_{2^k-1})(1,\tau,\underbrace{\tau, {\dots},\tau}_{2^k-1}),
\end{align*}
it follows by \cref{Lemma: Playing with endomorphic and normal closure of g} (iii) that $(\tau,\tau,1,\stackrel{2^k-1}{\dots},1)\in\langle\langle g\rangle\rangle$. Finally, by \cref{Lemma: first two coordinates same element and the remaining coordinates trivial} and \cref{Lemma: Playing with endomorphic and normal closure of g} (i), we conclude that $M_2=M_1^{e_d}$.

If $d = 2^k + s$ with $s\in \{2,\dots,2^{k}-1\}$, then $e_d = 2^k o_d$. Similarly to the previous case we have that
\begin{align*}
    (\tau,\underbrace{1,{\dots},1}_{2^k-1},\tau,\tau,\underbrace{1,{\dots},1}_{s-3},\tau)(1\ 2\ \dots\ 2^k)(2^k+1\ 2^k+2)\in M_1=(\Aut~\T)' 
\end{align*}
for $\tau=(1,\dots,1)(1\ 2)$. By (\ref{power}) we get
\begin{align*}
    ((\tau,\underbrace{1,{\dots},1}_{2^k-1},\tau,\tau,\underbrace{1,{\dots},1}_{s-3},\tau)(1\ 2\ \dots\ 2^k)(2^k+1\ 2^k+2))^{e_d}
    &=(\underbrace{\tau,{\dots},\tau}_{2^k},\underbrace{1,{\dots},1}_{s})^{o_d}\\&=(\underbrace{\tau,{\dots},\tau}_{2^k},\underbrace{1,{\dots},1}_{s}),
\end{align*}
and again we conclude that $M_2=M_1^{e_d}$.
\end{proof}

\begin{lem}\label{M3=M22}Let $\T$ be the $d$-adic tree, with $d\geq 3$ and odd. Then $M_3=M_2^{2}$.
\end{lem}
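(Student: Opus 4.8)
The plan is to prove the two inclusions $M_2^2\le M_3$ and $M_3\le M_2^2$ separately, where $M_2^2$ denotes the verbal subgroup generated by the squares of the elements of $M_2$ (equivalently, the set of products of such squares, since inverses of squares are squares).

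For the easy inclusion I would take an arbitrary generator $g^2$ with $g=(g_1,\dots,g_d)\in M_2\le\st(1)$. Since $g$ has trivial label at the root, squaring is coordinatewise, that is $g^2=(g_1^2,\dots,g_d^2)$. By \cref{M1=M02} we have $(\Aut\T)^2=(\Aut\T)'$, so each $g_i^2\in(\Aut\T)'$, and therefore $g^2\in(\Aut\T)'\times\dots\times(\Aut\T)'=M_3$. As $M_3$ is a subgroup, this gives $M_2^2\le M_3$.

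The substance of the argument is the reverse inclusion, and the key step is to show that $(a^2,1,\dots,1)\in M_2^2$ for every $a\in\Aut\T$. The obstacle is that $(a,1,\dots,1)$ need not lie in $M_2$, since $a$ need not belong to $(\Aut\T)'$. To fix this I would pad the first coordinate with a correcting involution placed in a second coordinate: by \cref{Lemma: coset contains elements of order two} (this is exactly where $d\ge 3$ enters) there is an element $b$ with $b^2=1$ and $P(b)=P(a)$, taking $b=1$ in the case $P(a)=0$. Then $g=(a,b,1,\dots,1)$ satisfies $P(ab)=P(a)+P(b)=0$, so $ab\in(\Aut\T)'$ and hence $g\in(\Aut\T)'\cap\st(1)=M_2$. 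Squaring gives $g^2=(a^2,b^2,1,\dots,1)=(a^2,1,\dots,1)\in M_2^2$, as desired. The same construction, placing $a$ in the $j$-th coordinate and the involution in any other coordinate, shows $(1,\dots,a^2,\dots,1)\in M_2^2$ for every position $j$.

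To finish I would invoke \cref{M1=M02} once more: since $(\Aut\T)'=(\Aut\T)^2$, every $h\in(\Aut\T)'$ is a product $h=a_1^2\cdots a_m^2$ of squares in $\Aut\T$. Multiplying the corresponding elements $(a_\ell^2,1,\dots,1)\in M_2^2$ yields $(h,1,\dots,1)\in M_2^2$, and likewise $h$ can be placed in any single coordinate. Such elements generate $M_3=(\Aut\T)'\times\dots\times(\Aut\T)'$, so $M_3\le M_2^2$, and together with the first inclusion this gives $M_3=M_2^2$. I expect the only genuinely delicate point to be the existence of the correcting involution $b$ with prescribed $P$-value, which is precisely what \cref{Lemma: coset contains elements of order two} provides and which is what forces the hypothesis $d\ge 3$.
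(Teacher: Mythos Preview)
Your proof is correct and follows essentially the same approach as the paper's: both directions use \cref{M1=M02}, and for the key inclusion $M_3\le M_2^2$ both arguments place the given element $a$ in one coordinate and an involution from the same $(\Aut\T)'$-coset (supplied by \cref{Lemma: coset contains elements of order two}) in another coordinate, so that the resulting tuple lies in $M_2$ and its square is $(a^2,1,\dots,1)$. Your treatment is slightly more careful in singling out the case $P(a)=0$ (where $b=1$ suffices) and in spelling out why squares can be placed in any coordinate, but the underlying idea is identical.
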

\begin{proof}
    Let $g=(g_1, \dots, g_d) \in \St(1)$. Then by \cref{M1=M02} we have
\begin{align*}
    g^2=(g_1,\dots,g_d)^2=(g_1^2,\dots,g_d^2)\in (\Aut\T)'\times\dots\times (\Aut\T)'=M_3.
\end{align*}
Since $M_2\leq \St(1)$, it follows that $M_2^2\leq M_3$. Conversely, by \cref{Lemma: coset contains elements of order two} we know that for each $g\in \Aut\T$ there exist elements $h_g\in\Aut\T$ of order 2 and $g'\in (\Aut\T)'$ such that $g=h_gg'$. Hence
\begin{align*}
    (g^2,1,\dots,1)=(g^2,h_g^2,1,\dots,1)=(g,h_g,1,\dots,1)^2\in M_2^2
\end{align*}
since $(g,h_g,1,\dots,1)\in M_2$ as $gh_g^{-1}=gh_g\in (\Aut\T)'$. 
From \cref{M1=M02} we know that $(\Aut\T)'$ is generated by squares of elements of $\Aut\T$, so we conclude that $M_2^2\geq (\Aut~\T)'\times\stackrel{d}{\dots}\times(\Aut~\T)'=M_3$.
\end{proof}
\begin{lem}\label{M=M}Let $\T$ be the $d$-adic tree, with $d\geq 3$ and odd. Then, for $k\geq 0$,
\begin{align*}
    M_{2k+1}&=M_{2k}^2\\
    M_{2k+2}&=M_{2k+1}^{e_d}.
\end{align*}
\end{lem}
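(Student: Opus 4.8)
The plan is to prove both identities simultaneously by induction on $k$, climbing two indices at a time along the self-similar product structure $M_{k+2}=M_k\times\stackrel{d}{\dots}\times M_k$. I would unify the two equations into a single statement $P(k)$: writing $n_k=2$ when $k$ is even and $n_k=e_d$ when $k$ is odd, the claim $P(k)$ reads $M_{k+1}=M_k^{n_k}$, where $M_k^{n_k}$ denotes the subgroup of $M_k$ generated by $n_k$-th powers. The two displayed equalities of the lemma are precisely $P(2k)$ and $P(2k+1)$, so it suffices to establish $P(k)$ for all $k\geq 0$.

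The base cases are already in hand: \cref{M1=M02} gives $M_1=M_0^2$, that is $P(0)$; \cref{M2=M1ed} gives $M_2=M_1^{e_d}$, that is $P(1)$; and \cref{M3=M22} gives $M_3=M_2^2$, that is $P(2)$. The one structural fact I need for the inductive step is that power subgroups split over direct products: for any group $A$ and any exponent $n$ one has $(A\times\stackrel{d}{\dots}\times A)^n=A^n\times\stackrel{d}{\dots}\times A^n$, since $(a_1,\dots,a_d)^n=(a_1^n,\dots,a_d^n)$ and the one-coordinate elements $(a^n,1,\dots,1)$ already generate the full product of power subgroups. Because $\psi_1\colon\St(1)\to\Aut\T\times\stackrel{d}{\dots}\times\Aut\T$ is an isomorphism carrying $M_{k+2}$ onto $M_k\times\stackrel{d}{\dots}\times M_k$ for every $k\geq 1$, and isomorphisms preserve power subgroups, I obtain for every exponent $n$ and every $k\geq 1$
\[
M_{k+2}^{\,n}=\psi_1^{-1}\big(M_k^{\,n}\times\stackrel{d}{\dots}\times M_k^{\,n}\big).
\]

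Now I would run the induction. Fix $k\geq 1$ and assume $P(k)$, i.e.\ $M_k^{\,n_k}=M_{k+1}$. Since $k$ and $k+2$ have the same parity, $n_{k+2}=n_k$. Applying the displayed identity with $n=n_k$ and then the inductive hypothesis gives
\[
M_{k+2}^{\,n_{k}}=\psi_1^{-1}\big(M_{k+1}\times\stackrel{d}{\dots}\times M_{k+1}\big)=M_{k+3},
\]
the final equality being the defining relation $M_{(k+1)+2}=M_{k+1}\times\stackrel{d}{\dots}\times M_{k+1}$, which is valid because $k+1\geq 1$. This is exactly $P(k+2)$. Thus $P(1)$ propagates up the odd chain to $P(3),P(5),\dots$, and $P(2)$ propagates up the even chain to $P(4),P(6),\dots$, while $P(0)$ is supplied directly; together these cover all $k\geq 0$.

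The only delicate point — and the reason three base cases are required rather than two — is that the product decomposition $M_{k+2}=M_k\times\stackrel{d}{\dots}\times M_k$ holds only for $k\geq 1$: for $k=0$ one has $M_2=(\Aut\T)'\cap\St(1)$, a proper subgroup of $M_0\times\stackrel{d}{\dots}\times M_0=\St(1)$. Consequently the inductive step cannot bridge from $P(0)$ to $P(2)$, so $M_3=M_2^2$ genuinely must be established on its own by \cref{M3=M22}; once $P(1)$ and $P(2)$ are available, the induction proceeds without further obstruction. I expect no computational difficulty in this argument — the work has already been front-loaded into the three base-case lemmas — the care lies entirely in the parity bookkeeping and in respecting the $k\geq 1$ restriction on the product structure.
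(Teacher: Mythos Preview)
Your proof is correct and follows essentially the same approach as the paper: both arguments reduce to the three base-case lemmas together with the fact that power subgroups split across the direct-product decomposition of the $M_k$. The only difference is presentational---you package it as a step-2 induction via the recursion $M_{k+2}=M_k\times\stackrel{d}{\dots}\times M_k$, whereas the paper unrolls directly to the $d^{k-1}$-fold product and substitutes $M_3=M_2^2$ and $M_2=M_1^{e_d}$ in one shot.
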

\begin{proof} The case $M_0^2=M_1$ is \cref{M1=M02}. Let assume $k>0$. By using \cref{M3=M22} we get 
\begin{align*}
    M_{2k+1}&=(\Aut\T)'\times\stackrel{d^k}{\dots}\times(\Aut\T)'\\
    &=((\Aut\T)'\times\stackrel{d^k}{\dots}\times(\Aut\T)')\times\stackrel{d^{k-1}}{\dots}\times((\Aut\T)'\times\stackrel{d^k}{\dots}\times(\Aut\T)')\\
    &=(M_3\times\stackrel{d^{k-1}}{\dots}\times M_3)\\
    &=(M_2^2\times\stackrel{d^{k-1}}{\dots}\times M_2^2)\\
    &=(M_2\times\stackrel{d^{k-1}}{\dots}\times M_2)^2\\    &=((\Aut\T)'\cap\St(1)\times\stackrel{d^{k-1}}{\dots}\times (\Aut\T)'\cap\St(1))^2\\
    &=M_{2k}^2.
\end{align*}
Similarly, by \cref{M2=M1ed} we obtain 
\begin{align*}
    M_{2k+2}&=((\Aut\T)'\cap\St(1))\times\stackrel{d^k}{\dots}\times ((\Aut\T)'\cap \St(1))\\
    &=(M_2\times\stackrel{d^k}{\dots}\times M_2)\\
    &=(M_1^{e_d}\times\stackrel{d^k}{\dots}\times M_1^{e_d})\\    
    &=(M_1\times\stackrel{d^k}{\dots}\times M_1)^{e_d}\\
    &=((\Aut\T)'\times\stackrel{d^k}{\dots}\times (\Aut\T)')^{e_d}\\
    &=M_{2k+1}^{e_d}.
\end{align*}
\end{proof}

\begin{thm}
Let $\T$ be the $d$-adic tree, with $d\geq 3$ and odd. Then all fully invariant subgroups of $\Aut\T$ belong to the series $M_0 > M_1 > M_2 >\dots$, and they are verbal.
\end{thm}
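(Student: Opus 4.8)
The plan is to split the argument into two independent parts: first, that every fully invariant subgroup of $\Aut\T$ coincides with one of the terms $M_k$ (with the trivial subgroup appearing as the intersection of the whole chain), and second, that each $M_k$ is verbal. The first part will rest entirely on \cref{clauseI}, while the second will exploit the description of the $M_k$ as iterated power subgroups furnished by \cref{M=M}.

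For the first part, I would let $H$ be a nontrivial fully invariant subgroup of $\Aut\T$. Since $H\leq M_0=\Aut\T$ and $\bigcap_{k\geq 0}M_k=\{1\}$, the set of indices $k$ with $H\leq M_k$ is nonempty and, because $H\neq\{1\}$, bounded above (otherwise $H\leq\bigcap_k M_k=\{1\}$); let $k_0$ be its maximum. Then $H\leq M_{k_0}$ but $H\not\leq M_{k_0+1}$, so there is some $g\in H\setminus M_{k_0+1}$, which necessarily lies in $M_{k_0}\setminus M_{k_0+1}$. Full invariance of $H$ forces $\langle\langle g\rangle\rangle\leq H$, while \cref{clauseI} gives $M_{k_0}\leq\langle\langle g\rangle\rangle$; combining these yields $H=M_{k_0}$. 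The trivial subgroup is then accounted for as $\bigcap_k M_k$, and it is verbal (via the empty set of words), so it poses no real difficulty.

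For the second part, the key observation I would isolate as a sub-lemma is that verbality is preserved under passing to power subgroups: if $V=\mathcal{W}(G)$ is the verbal subgroup of $G$ determined by a set of words $\mathcal{W}$, then for every $n\geq 1$ the power subgroup $V^n=\langle h^n : h\in V\rangle$ is again verbal in $G$. Indeed, every element of $V$ is a product $v_1(\bar x_1)^{\epsilon_1}\cdots v_r(\bar x_r)^{\epsilon_r}$ of values of words in $\mathcal{W}$ and their inverses, so its $n$-th power is precisely a value of the word $\big(v_1(\bar x_1)^{\epsilon_1}\cdots v_r(\bar x_r)^{\epsilon_r}\big)^n$; letting $r$, the $v_i\in\mathcal{W}$ and the signs $\epsilon_i$ vary produces a set of words whose verbal subgroup in $G$ is exactly $V^n$. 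With this in hand I would induct along the chain: $M_0=\Aut\T$ is verbal (word $x$), and \cref{M=M} gives $M_{2k+1}=M_{2k}^2$ and $M_{2k+2}=M_{2k+1}^{e_d}$, so each successive term is the power subgroup of a verbal subgroup, hence verbal.

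Combining the two parts proves the theorem: the fully invariant subgroups are precisely the $M_k$ (together with $\{1\}$), and all of them are verbal. I expect the only genuinely delicate point to be the sub-lemma in the second part, whose correct formulation requires allowing verbal subgroups to be defined by \emph{sets} of words rather than a single word; once this is granted, the remainder is a routine assembly of \cref{clauseI} and \cref{M=M}. A secondary, purely bookkeeping subtlety is the status of the trivial subgroup, which sits below every $M_k$ rather than being a term of the strictly decreasing chain.
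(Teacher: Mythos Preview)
Your proposal is correct and follows essentially the same route as the paper: use \cref{clauseI} to pin any nontrivial fully invariant subgroup to a specific $M_k$, and use \cref{M=M} to see that each $M_k$ is a power subgroup of its predecessor, hence verbal. Your treatment is in fact slightly more careful than the paper's on two minor points: you explicitly separate out the trivial subgroup (the paper's ``for all $g\in G$ there exists $k_g$ with $g\in M_{k_g}\setminus M_{k_g+1}$'' tacitly ignores $g=1$), and you spell out the sub-lemma that $V$ verbal implies $V^n$ verbal, which the paper compresses into a single clause.
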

\begin{proof}
By \cref{M=M} each $M_i$ is verbal, since it is generated by powers of elements of a verbal subgroup. It remains to prove that every fully invariant subgroup of $\Aut\T$ equals $M_k$ for some $k\geq 0$. Let $G$ be a fully invariant subgroup of $\Aut\T$. Then for all $g\in G$ there exists some $k_g\geq 0$ such that 
\begin{align*} 
    g\in M_{k_g}\setminus M_{k_g+1}.
\end{align*}
Let $\bar k=k_{g_0}$ be the smallest integer in the set $\{k_g:g\in G\}$. Hence $G=M_{\bar k}$ by the definition of $\bar k$. On the other hand $g_0\in M_{_{\bar k}}\setminus M_{_{\bar k+1}}$, so by \cref{clauseI} it follows that 
\begin{align*}
    G\geq \langle\langle g_0\rangle\rangle\geq M_{_{\bar k}},
\end{align*}
concluding the proof.
\end{proof}

\bibliographystyle{amsplain}
\bibliography{bib}
\end{document}